\newtheorem{theorem}{Theorem}
\newtheorem*{theorem*}{Theorem}
\newtheorem{lemma}{Lemma}[section]
\newtheorem*{lemma*}{Lemma}
\newtheorem{corollary}[lemma]{Corollary}
\newtheorem*{corollary*}{Corollary}
\newtheorem{proposition}[lemma]{Proposition}
\newtheorem*{proposition*}{Proposition}
\newtheorem*{remark*}{Remark}
\newcommand{\PA}{\mathsf{PA}}
\newcommand{\EA}{\mathsf{EA}}
\newcommand{\ISi}{\mathsf{I}\Sigma}
\newcommand{\IPi}{\mathsf{I}\Pi}
\newcommand{\BSi}{\mathsf{B}\Sigma}
\newcommand{\Rfn}{\mathsf{Rfn}}
\newcommand{\RFN}{\mathsf{RFN}}
\newcommand{\True}{\mathsf{True}}
\newcommand{\Prf}{\mathsf{Prf}}
\newcommand{\Con}{\mathsf{Con}}
\newcommand{\ul}{\ulcorner}
\newcommand{\ur}{\urcorner}
\newcommand{\imp}{\rightarrow}
\newcommand{\eqv}{\leftrightarrow}
\newcommand{\gn}[1]{\ulcorner #1 \urcorner}
\renewcommand{\phi}{\varphi}
\newcommand{\num}{\underline}
\newcommand{\nat}{\mathbb{N}}
\newcommand{\omegacon}{\omega\textsf{-}\Con}
\newcommand{\Def}{\mathsf{Def}}
\newcommand{\DRfn}{\mathsf{DRfn}}
\title{Local reflection, definable elements and 1-provability}
\author{Evgeny Kolmakov\\
        Lomonosov Moscow State University\\
        Moscow, Russia\\
        \texttt{kolmakov.evgn@gmail.com}
}
\begin{document}

\maketitle

\begin{abstract}

In this note we study several topics related to the schema of local reflection $\Rfn(T)$
and its partial and relativized variants. Firstly, we introduce the principle of uniform reflection with $\Sigma_n$-definable parameters,
establish its relationship with the relativized local reflection principles and corresponding versions of induction with definable parameters. 
Using this schema we give a new model-theoretic proof of the $\Sigma_{n+2}$-conservativity of uniform $\Sigma_{n+1}$-reflection over 
relativized local $\Sigma_{n+1}$-reflection.
We also study the proof-theoretic strength of Feferman's theorem, i.e., the assertion
of $1$-provability in $S$ of the local reflection schema $\Rfn(S)$, and its generalized versions. 
We relate this assertion to the uniform $\Sigma_2$-reflection schema and, in particular, 
obtain an alternative axiomatization of $\ISi_1$. 

{\vskip 1em}

\noindent{\bf Keywords:} reflection principles, 1-provability, definable elements, $\omega$-consistency.
\end{abstract}

\section{Introduction}

In the original formulation of G{\"o}del's first incompleteness theorem
no semantic concepts, such as \emph{soundness}, were used. Instead, G{\"o}del used a rather strong syntactic notion of $\omega$-consistency. 
However, in the contemporary formulations of G{\"o}del's incompleteness theorems a weaker condition of \emph{$\Sigma_1$-soundness}
is often used instead of $\omega$-consistency.

On the syntactic level, various semantic notions of soundness are usually translated into the so-called \emph{reflection principles}.
These principles typically express the following form of soundness
$$
T \vdash \phi \Longrightarrow \nat \models \phi,
$$
with different conditions being imposed on the formula $\phi$ and on the notion of provability in $T$.
Generalizing G{\"o}del's consistency assertion $\Con(T)$, 
principles of this kind provide plenty of true sentences which are nevertheless unprovable in $T$.

Reflection principles were shown to be a convenient tool for the analysis of formal theories
by demonstrating that many other principles, e.g., induction or different forms of consistency, can be expressed as some form of reflection
and due to the \emph{unboundedness theorems} of G.~Kreisel and A.~L{\'e}vy \cite{KrLe}, which allow to conclude that 
one formal theory $T$ cannot be axiomatized over another formal theory $U$ by the arithmetical sentences of a certain logical complexity,
whenever $T$ proves the corresponding form of reflection for $U$. 
For a survey of the results on reflection principles, see C.~Smory{\'n}ski \cite{Smo77b} and L.~Beklemishev \cite{Bek05en}.

Various kinds of reflection principles and consistency assertions for $\PA$
were studied by R.~Kaye and H.~Kotlarski in \cite{KayKot} from the point of view of ACT-extensions of models 
(extensions constructed by the means of the arithmetized completeness theorem).
More specifically, the authors characterize these principles as the first-order theories of the class of models of $\PA$ 
having ACT-extensions with certain model-theoretic properties.

One of the principles considered in \cite{KayKot} was the principle of definable reflection $\DRfn(\PA)$,
which is the restriction of $\RFN(\PA)$ to formulas asserting definability of an element, i.e., formulas of the form 
$\phi(x) \land \forall y\left(\phi(y) \imp y = x\right)$.
This principle, however, turns out to be equivalent to $\RFN(\PA)$.

The idea of considering intermediate principles between local (only standard parameters) 
and uniform (no restriction on parameters) schemata by allowing restricted use of parameters provides 
a way to devise new schemata or at least give alternative, sometimes more useful, characterization of the existing principles.
In this respect, let us mention the work of A.~Cord{\'o}n-Franco, A.~Fern{\'a}ndez-Margarit and F.~F.~Lara-Mart{\'i}n \cite{CordFr}, 
where this idea is applied to the induction schemata. 
It is shown that parameter-free induction schema is equivalent to its variants with parameters restricted 
to the certain types of definable elements (cf. Proposition \ref{prop:cordfr}).

Continuing this line of research, we introduce a refined version of the definable reflection principle, 
namely, the schema of uniform reflection with $\Sigma_n$-definable parameters and 
show it to be equivalent to the corresponding form of relativized local reflection. 
This fact is then used to give a rather short model-theoretic proof of the 
$\Sigma_{n+2}$-conservativity of uniform $\Sigma_{n+1}$-reflection over 
relativized local $\Sigma_{n+1}$-reflection (Theorem \ref{th:s2conserv}).
We also establish the connection between this version of reflection principles and 
the induction schemata with definable parameters introduced in \cite{CordFr} (Corollary \ref{col:cordfr}).

Let us also mention the following questions regarding the relationship between local $\omega$-consistency $\omegacon(\PA)$
and local reflection $\Rfn(\PA)$, which were listed as open in \cite{KayKot}.

\medskip
\noindent {\bf Problem 8.1.} Do there exists models $M \models \PA + \Rfn(\PA) + \neg \omegacon(\PA)$? 

\medskip
\noindent {\bf Problem 8.3.} Is it true that $\PA + \Rfn(\PA) \not \vdash \omegacon^{\mathsf{Th}}(\PA)$?

\medskip
\noindent {\bf Problem 8.4.} Over $\PA$, does $\omegacon(\PA)$ imply $\Con(\PA + \Rfn(\PA))$?

\medskip \noindent For the formal definitions of the $\omega$-consistency and reflection principles mentioned in these questions see Section 2.

We show that all of these three questions have positive answer for a wide class of theories $T$ and give a quick solution based on Smory{\'n}ski's characterization 
of the schema $\omegacon(T)$ and its restricted variants in terms of partial reflection principles for $T$ (see Theorem \ref{th:ocon-s2rfn}). 
Let us note that for the case of $\PA$ these problems have already been solved by V.~Yu.~Shavrukov (unpublished), see Remark in Section 4 for a further discussion. 
We also prove the schema $\omegacon^{\mathsf{Th}}(T)$ to be equivalent to $\RFN(T)$ (Theorem \ref{th:oconth}).

Furthermore, we study the assertion (so-called Feferman’s theorem \cite[Theorem~4.5]{Fef62}) 
of $1$-provability in $S$ of the local reflection schema $\Rfn(S)$ and its generalized versions.
We relate this assertion to the usual forms of reflection (Theorem~\ref{th:rfnsi2}), 
which, in particular, allows to give an alternative axiomatization of the theory $\ISi_1$ (Corollary~\ref{cor:isi1rfn}).

The paper is organized as follows. Section 2 introduces the basic notions and notation used throughout this note.
In Section 3 we introduce and study the schema of uniform reflection with $\Sigma_n$-definable parameters.
In Section 4 we give a quick solution to the problems listed above and discuss the proof-theoretic strength of Feferman's theorem.

\section{Preliminaries}
In this note we consider first-order theories in the language of arithmetic. As our basic theory we take \emph{Elementary arithmetic} $\EA$
(sometimes denoted as $\mathsf{I}\Delta_0(\exp)$), that is, the first-order theory formulated in the language $0, (\cdot)', +, \times$
extended by the unary function symbol $\exp$ for the exponentiation function $2^x$.
It has the standard defining axioms for these symbols and the induction schema for all \emph{elementary formulas} 
(we also call such formulas \emph{bounded}), 
i.e., formulas in the language with exponent containing only bounded (by the terms in the language with exponent) quantifiers.
The theory $\EA^+$ is defined to be an extension of $\EA$ with the axiom asserting the totality of the superexponential function $2^x_y$.

We define classes $\Sigma_0$ and $\Pi_0$ to be the classes of all elementary (bounded) formulas. 
After that the classes $\Sigma_n$ and $\Pi_n$ of arithmetical hierarchy are defined in a standard way for all $n \geqslant 0$.

If we allow induction for all arithmetical formulas, the resulting theory is \emph{Peano arithmetic} denoted by $\PA$.
For a fixed class of arithmetical formulas $\Gamma$ the fragment of $\PA$ obtained by restricting the induction schema
$$
\phi(0) \land \forall x \left(\phi(x) \imp \phi(x + 1)\right) \imp \forall x\, \phi(x), \quad \phi(x) \in \Gamma
$$
to $\Gamma$-formulas \emph{without parameters} is denoted by $\mathsf{I}\Gamma^-$ and called \emph{parameter free $\Gamma$-induction}.
If, in the schema above $\phi(x)$, is allowed to contain parameters, then we obtain the usual $\Gamma$-induction schema and the corresponding 
theory is denoted by $\mathsf{I}\Gamma$. 
We also consider the schema $\mathsf{B}\Sigma_n$ of \emph{$\Sigma_n$-collection}
$$
\forall x < z\, \exists y\, \phi(x, y, a) \imp \exists u\, \forall x < z\, \exists y < u\, \phi(x, y, a), \quad \phi(x, y, a) \in \Sigma_n.
$$
For more details on these theories, see~\cite{KPD}.

All the theories considered in this note are supposed to be \emph{recursively axiomatizable consistent extensions} of $\EA$.
We assume that some standard arithmetization of syntax and the g{\"o}delnumbering of syntactic objects has been fixed.
In particular, we write $\gn{\phi}$ for the (numeral of the) g{\"o}del\-number of $\varphi$.
As usual, each theory $T$ is given to us by an elementary formula $\sigma_T(x)$,
defining the set of axioms of $T$ in the standard model of arithmetic. 
The formula $\sigma_T(x)$ is used in the construction of the formula $\Prf_T(y, x)$ representing the relation ``$y$ codes a $T$-proof of the formula with g{\"o}delnumber $x$''.
The \emph{standard provability predicate} for $T$ is given by $\exists y\, \Prf_T(y, x)$, and we denote this formula by $\Box_T(x)$.
We often write $\Box_T \varphi$ instead of $\Box_T(\gn{\phi})$ and use the notation $\Diamond_T\phi$ for $\neg \Box_T \neg \phi$ .
The sentence $\Diamond_T \top$ is the consistency assertion for $T$ and is also denoted by $\Con(T)$.

The predicate $\Box_T$ satisfies \emph{L{\"o}b's derivability conditions} provably in $\EA$ (cf.~\cite{Bek05en}):
\begin{enumerate}
\item If $T \vdash \varphi$, then $\EA \vdash \Box_T\varphi$.
\item $\EA \vdash \Box_T(\varphi \imp \psi) \imp (\Box_T\varphi \imp \Box_T\psi)$.
\item $\EA \vdash \Box_T\varphi \imp \Box_T\Box_T\varphi$.
\end{enumerate}
Point 3 follows from the general fact known as \emph{provable $\Sigma_1$-completeness}:
$$
\EA \vdash \forall x_1,\dots, x_m \left(\sigma(x_1, \dots, x_m) \imp \Box_T\gn{\sigma(\num{x}_1, \dots, \num{x}_m)}\right),
$$
whenever $\sigma(x_1, \dots, x_m)$ is a $\Sigma_1$-formula. 
Here the underline notation $\ul \varphi(\num{x}) \ur$ stands for the elementarily definable term, representing the elementary function
that maps $k$ to the g{\"o}delnumber of the formula $\ul\varphi(\num{k})\ur$.
In what follows we usually write just $\Box_T \phi(\num{x})$ instead of $\Box_T \gn{\phi(\num{x})}$.

If two theories $T$ and $U$ have the same theorems, we say that they are  \emph{deductively equivalent} and denote this by $T \equiv U$.
If they prove the same arithmetical sentences of complexity $\Gamma$, we write $T \equiv_\Gamma U$.

In this note we are mainly interested in the following three principles (or schemata) for a given arithmetical theory $T$:
\begin{itemize}

\item \emph{local reflection} $\Rfn(T)$: 
$$
\Box_T\phi \imp \phi,
$$
for each arithmetical sentence $\phi$;

\item \emph{uniform reflection} $\RFN(T)$: 
$$
\forall x_1, \dots, x_n\, \left( \Box_T \phi(\num{x}_1, \dots, \num{x}_n) \imp \phi(x_1, \dots, x_n) \right),
$$
for each arithmetical formula $\phi(x_1, \dots, x_n)$;

\item \emph{local $\omega$-consistency} $\omegacon(T)$:
$$
\forall x_1, \dots, x_n\, \Box_T \phi(\num{x}_1, \dots, \num{x}_n) \imp \Diamond_T \forall x_1, \dots, x_n\, \phi(x_1, \dots, x_n),
$$
for each arithmetical formula $\phi(x_1, \dots, x_n)$.

\end{itemize}

For a more detailed analysis of the principles above we also consider their \emph{partial} variants, which are obtained
by imposing the restriction $\phi \in \Gamma$, where $\Gamma$ is some class of arithmetical formulas (usually, $\Sigma_n$ or $\Pi_n$).
Corresponding partial principles are denoted by $\omegacon_\Gamma(T)$, $\Rfn_\Gamma(T)$ and $\RFN_\Gamma(T)$, respectively.
Partial uniform reflection principles satisfy the equivalence $\RFN_{\Pi_{n+1}}(T) \equiv \RFN_{\Sigma_n}(T)$ over $\EA$
for each $n \geqslant 0$ (see \cite[Lemma 2.4]{Bek05en}).

Note that, in the definitions of $\RFN(T)$ and  $\omegacon(T)$ (and their partial analogues), 
by using sequence coding functions, we can, equivalently, restrict these schemata to the formulas $\phi(x)$ with a single free variable $x$.
Let us also mention the following useful principle
known as the \textit{small reflection} (see \cite[Lemma 2.2]{Bek05en}). 
\begin{proposition}\label{prop:smrfn}
For each formula $\phi(x)$
$$
\EA \vdash \forall x, y\, \Box_T \left(\Prf_T(\num{y}, \gn{\phi(\num{x})}) \imp \phi(\num{x})\right).
$$
\end{proposition}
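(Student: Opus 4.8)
\emph{Proof proposal.} Fix $\phi(x)$ and abbreviate $\chi(x,y) :\equiv \Prf_T(\num y, \gn{\phi(\num x)})$. As a formula with free variables $x, y$, the predicate $\Prf_T(y, \gn{\phi(\num x)})$ is elementary, so $\EA$ proves $\Prf_T(y, \gn{\phi(\num x)}) \lor \neg\Prf_T(y, \gn{\phi(\num x)})$. The plan is to reason inside $\EA$: fix $x$ and $y$ and split on this disjunction, in each case producing a $T$-proof of $\chi(x,y) \imp \phi(\num x)$ in a way $\EA$ can verify uniformly in $x, y$.

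If $\Prf_T(y, \gn{\phi(\num x)})$ holds, then trivially $\exists z\, \Prf_T(z, \gn{\phi(\num x)})$, i.e.\ $\Box_T \phi(\num x)$. If it fails, then $\neg\Prf_T(y, \gn{\phi(\num x)})$ is a true elementary, hence $\Sigma_1$, statement, so by provable $\Sigma_1$-completeness (instantiating the displayed schema at the elementary formula $\neg\Prf_T(y, \gn{\phi(\num x)})$) we get $\Box_T \neg\chi(x,y)$. In either case we have arrived at $\Box_T \psi$ for a sentence $\psi \in \{\phi(\num x), \neg\chi(x,y)\}$ such that $\psi \imp (\chi(x,y) \imp \phi(\num x))$ is a propositional tautology of fixed shape ($A \imp (B \imp A)$, respectively $\neg B \imp (B \imp A)$). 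Applying the parametrized forms of L{\"o}b's conditions~1 and~2 — that is, $\EA \vdash \forall x, y\, \Box_T(\psi \imp (\chi(x,y) \imp \phi(\num x)))$ together with $\EA \vdash \forall x, y\,\bigl(\Box_T(\psi \imp (\chi(x,y) \imp \phi(\num x))) \imp (\Box_T \psi \imp \Box_T(\chi(x,y) \imp \phi(\num x)))\bigr)$ — we obtain $\Box_T(\chi(x,y) \imp \phi(\num x))$. Discharging the case split yields $\EA \vdash \forall x, y\, \Box_T(\Prf_T(\num y, \gn{\phi(\num x)}) \imp \phi(\num x))$, as desired.

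The only point requiring care — and the one I would write out in full — is the passage from $\Box_T \psi$ to $\Box_T(\chi(x,y) \imp \phi(\num x))$, i.e.\ that $T$-provability is closed, provably in $\EA$ and uniformly in the parameters $x, y$, under the relevant propositional inference. This is not an extra assumption: the standard proofs of L{\"o}b's conditions~1 and~2 already supply, for a given (elementary term denoting a) sentence, an elementary and $\EA$-provably correct proof transformation, and they do so uniformly, so they apply verbatim when the sentence in question varies with parameters. No idea beyond this routine formalized proof theory is needed; the whole content of the proposition is that the case split on $\Prf_T$ can be carried out \emph{inside} $\EA$, which is possible precisely because $\Prf_T$ is elementary.
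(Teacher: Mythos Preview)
Your argument is correct and is exactly the standard proof of the small reflection principle: case-split in $\EA$ on the decidable predicate $\Prf_T(y,\gn{\phi(\num x)})$, use provable $\Sigma_1$-completeness in the negative case, and close up with the parametrized L{\"o}b conditions. The paper does not actually prove this proposition; it merely states it and refers to \cite[Lemma~2.2]{Bek05en}, whose proof is precisely the one you have written out.
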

Using this principle one shows that over $\EA$, the schema of uniform reflection $\RFN(T)$ is equivalent to
the schema
$$
\forall x\, \Box_T \phi(\num{x}) \imp \forall x\, \phi(x),
$$
for all arithmetical formulas $\phi(x)$.

In \cite{KayKot} the authors have also considered the following schema $\omegacon^{\mathsf{Th}}(T)$ (note that we use slightly different notation), 
which is a strengthening of $\omegacon(T)$,
\begin{itemize}
\item \emph{local $\omega$-consistency of the theory of the model} $\omegacon^{\mathsf{Th}}(T)$: 
$$
\sigma \land \forall x\, \Box_T( \sigma \imp  \phi(\num{x}))  \imp \Diamond_T (\sigma \land \forall x\, \phi(x)),
$$
for each arithmetical formula $\phi(x)$ with a single free variable $x$ and arithmetical sentence $\sigma$.
\end{itemize} 
It is shown that $\omegacon^{\mathsf{Th}}(\PA)$ implies $\Rfn(\PA)$ (take $\phi(x)$ to be $x = x$) 
and is implied by $\RFN(\PA)$.

It is known that for each $n > 0$ there exists an arithmetical $\Pi_n$-formula 
$\True_{\Pi_n}(x)$ (known as a \emph{truth definition} for $\Pi_n$-formulas) such that
$$
\EA \vdash \forall x_1,\dots, x_m\:(\varphi(x_1, \dots, x_m) \eqv \True_{\Pi_n}(\gn{\varphi(\num{x}_1, \dots, \num{x}_m)})),
$$
for every $\Pi_n$-formula $\varphi(x_1, \dots, x_m)$,
and this fact itself is formalizable in $\EA$, so
$$
\EA \vdash \forall \pi \in \Pi_n\, \Box_T \left(\pi \eqv \True_{\Pi_n}(\pi)\right).
$$

Using these properties, it is not hard to check that over $\EA$, the schema $\RFN_{\Sigma_n}(T)$ is equivalent to its universal instance with $\phi(x)$
taken to be $\neg \True_{\Pi_n}(x)$.

The truth predicates are used to define the so-called strong provability predicates. Namely, the following formula
$$
[n]_T \varphi := \exists \pi \left(\True_{\Pi_n}(\pi) \wedge \Box_T(\pi \imp \varphi)\right),
$$
defines the predicate of \emph{$n$-provability}, i.e., usual provability in $T$ together with all true $\Pi_n$-sentences taken as additional axioms.
The predicate $[n]_T$ satisfies the same derivability conditions as $\Box_T$ and is provably $\Sigma_{n+1}$-complete (see \cite{Bek05en}).
It is easy to see that $\RFN_{\Sigma_{n+1}}(T)$ is equivalent to $\neg [n]_T \bot$ over $\EA$.

The \emph{relativized local reflection principles} $\Rfn^n_\Gamma(T)$ are defined 
analogously to $\Rfn_\Gamma(T)$ but with $n$-provability predicate $[n]_T$ instead of the usual provability $\Box_T$.

\section{Reflection with definable parameters}

In \cite{KayKot} the authors have introduced the following reflection schema 
(we formulate it in a dual form, which is actually used in the proofs there),
which can be seen as an intermediate scheme between local $\Rfn(T)$ and uniform reflection $\RFN(T)$, 
\begin{itemize}
\item \emph{definable reflection} $\DRfn(T)$ 
$$
\forall x \left(\phi(x) \land \forall y\, \left(\phi(y) \imp y = x\right) \imp 
\Diamond_T \left[\phi(\num{x}) \land \forall y\, \left(\phi(y) \imp y = \num{x} \right)\right] \right),
$$
for each arithmetical formula $\phi(x)$ with a single free variable $x$.
\end{itemize}
It was shown that $\DRfn(\PA)$ is actually equivalent to $\RFN(\PA)$ using indicators.
Let us include a proof of this fact without using indicators. 
\begin{proposition}
Over $\PA$, $\DRfn(T) \equiv \RFN(T)$.
\end{proposition}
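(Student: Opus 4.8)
The plan is to prove the two inclusions separately, the nontrivial direction being $\DRfn(T) \vdash \RFN(T)$, since $\RFN(T) \vdash \DRfn(T)$ is immediate: the dual uniform reflection schema in the form $\forall x\,\Box_T\psi(\num{x}) \imp \forall x\,\psi(x)$ (equivalently, in contrapositive/dual shape) directly yields each instance of $\DRfn(T)$ by taking $\psi(x)$ to be $\phi(x) \land \forall y(\phi(y)\imp y=x)$. So the work is to derive an arbitrary instance of $\RFN(T)$ from $\DRfn(T)$, working over $\PA$.

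First I would fix an arithmetical formula $\theta(x)$ and aim to prove $\forall x(\Box_T\theta(\num{x}) \imp \theta(x))$, which suffices by the remark (via Proposition~\ref{prop:smrfn}) that this universal form is equivalent to full $\RFN(T)$ over $\EA$. The key idea is to convert ``the $n$-th element'' into a definable element using the least-number principle available in $\PA$: for a target number $a$, consider the formula $\phi_a(x)$ expressing something like ``$x = a$'' — but of course $a$ is not standard, so instead I would use a formula uniformly in a parameter. Concretely, fix $x$ and work with $\phi(y) \equiv (y = x)$; this trivially satisfies $\phi(x) \land \forall y(\phi(y)\imp y=x)$, so $\DRfn(T)$ gives $\Diamond_T[\num{x} = \num{x} \land \forall y(y = \num{x} \imp y = \num{x})]$, which is vacuous. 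The real trick must encode $\theta$ into the defining formula. So I would instead take $\phi(y)$ to be something like $(\theta(x) \land y = 1) \lor (\neg\theta(x) \land y = 0)$ — a formula defining a unique element that ``records the truth value of $\theta(x)$'' — and then relate $\Box_T$ of the defining condition back to $\Box_T\theta(\num{x})$.

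The main obstacle will be bridging between $\Diamond_T$ of the ``unique-element'' statement and the desired $\Box_T\theta(\num{x})\imp\theta(x)$: one has to argue that if $\Box_T\theta(\num{x})$ held but $\theta(x)$ failed, then inside $T$ one can refute the definability assertion for the correct element, contradicting the $\Diamond_T$ provided by $\DRfn(T)$ — and to make the uniformity work one needs $\PA$ (not just $\EA$) to handle the parameter $x$ ranging over all of the model and to verify, by $\Sigma_n$-induction or a truth-definition argument, that the chosen $\phi(y,x)$ genuinely defines a unique element provably in $T$. I would handle the unboundedness of the logical complexity of $\theta$ by passing to the partial truth predicates: prove the universal instance of $\RFN_{\Sigma_n}(T)$ with $\theta(x) := \neg\True_{\Pi_n}(x)$ for each $n$, using $\phi(y,x) := (\neg\True_{\Pi_n}(x) \land y = 1)\lor(\True_{\Pi_n}(x)\land y=0)$, and verify provably in $T$ (using $\EA \vdash \forall\pi\in\Pi_n\,\Box_T(\pi\eqv\True_{\Pi_n}(\pi))$) that this is a legitimate unique-element formula; assembling these instances for all $n$ recovers full $\RFN(T)$. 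The delicate point throughout is keeping the provability operators correctly nested and confirming that $\PA$ proves the required uniqueness inside the box.
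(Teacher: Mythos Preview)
Your easy direction is fine. The hard direction, however, has a genuine gap that your proposal does not close.

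The schema $\DRfn(T)$ is stated only for formulas $\phi(x)$ with a \emph{single} free variable --- no parameters are allowed. Your key formulas, both the ``truth-value recorder'' $(\theta(x)\land y=1)\lor(\neg\theta(x)\land y=0)$ and the later $(\neg\True_{\Pi_n}(x)\land y=1)\lor(\True_{\Pi_n}(x)\land y=0)$, carry the ambient parameter $x$ in addition to the variable $y$ that is supposed to be uniquely defined. They are therefore not legitimate instances of $\DRfn(T)$ at all. This is not a technicality you can patch with $\PA$ or with truth predicates: if $\DRfn(T)$ permitted an arbitrary parameter, then taking $\phi(y):=(y=x)$ would already make every element ``definable'' and the equivalence with $\RFN(T)$ would be trivial over $\EA$. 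The entire content of the proposition is exactly that a \emph{parameter-free} definability hypothesis suffices.

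The paper's argument shows how the parameter is absorbed. One argues by contraposition: assuming $\exists x\,\neg\phi(x)$, the least-number principle in $\PA$ produces the \emph{least} counterexample, and this element is uniquely determined by the parameter-free formula $\delta(x):=\neg\phi(x)\land\forall z<x\,\phi(z)$. Applying $\DRfn(T)$ to $\delta$ gives $\exists x\,\Diamond_T\delta(\num{x})$, hence $\exists x\,\Diamond_T\neg\phi(\num{x})$, whose contrapositive is the desired instance $\forall x\,\Box_T\phi(\num{x})\imp\forall x\,\phi(x)$ of $\RFN(T)$. The role of $\PA$ is precisely to supply this least element and thereby eliminate the parameter; your proposal never exploits induction in this way.
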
 
\begin{proof}
Clearly, $\EA + \RFN(T) \vdash \DRfn(T)$. 
To prove the converse fix a formula $\phi(x)$, and let
$\delta(x)$ be the formula $\neg \phi(x) \land \forall z < x\, \phi(z)$. We have
\begin{align*}
\PA + \DRfn(T) \vdash \exists x\, \neg \phi(x) 
&\imp \exists x\, \left(\delta(x) \land \forall y\, (\delta(y) \imp y = x)\right)\\
&\imp \exists x\, \Diamond_T \left(\delta(\num{x}) \land \forall y\, (\delta(y) \imp y = \num{x})\right)\\
&\imp \exists x\, \Diamond_T \neg \phi(\num{x}),
\end{align*}
where the first implication uses induction and the second one uses $\DRfn(T)$. By contraposition we obtain
$$
\PA + \DRfn(T) \vdash \forall x\, \Box_T \phi(\num{x}) \imp \forall x\, \phi(x),
$$
that is, $\PA + \DRfn(T) \vdash \RFN(T)$.
\end{proof}

In this section we consider a refined version of the definable reflection principle, namely, \emph{uniform reflection with $\Sigma_n$-definable parameters},
and use it to give a model-theoretic proof of the $\Sigma_{n+2}$-conservativity of uniform reflection over relativized local reflection (Theorem \ref{th:s2conserv}).
We also discuss a relationship between these principles and the schemata of induction with definable parameters introduced
by A.~Cord{\'o}n-Franco et al. in \cite{CordFr}.

Recall the uniform $\Sigma_k$-reflection principle $\RFN_{\Sigma_k}(T)$:
$$
\forall x\left(\Box_T \phi(\num{x}) \imp \phi(x)\right), 
$$
where $\phi(x)$ is a $\Sigma_k$-formula. 
If we require the variable $x$ above to range only over the standard elements (numerals), then we get the schema that is equivalent to 
local reflection $\Rfn_{\Sigma_k}(T)$. 
We investigate the question: can we expand the range of $x$ to some nonstandard elements while still obtaining the equivalent schema?

Formally, we define the following schema of
\begin{itemize}
\item \emph{uniform $\Sigma_k$-reflection with $\Sigma_n$-definable parameters} $\RFN^{K^n}_{\Sigma_k}(T)$:
$$
\forall x\,
\left(\Def_\delta(x) \imp \left(\Box_T\phi(\num{x}) \imp \phi(x)
\right)
\right),
$$
for each $\Sigma_n$-formula  $\delta(x)$ and $\Sigma_k$-formula $\phi(x)$,
where 
$$
\Def_\delta(x) :=
\delta(x) \land \forall y, z\, (\delta(y) \land \delta(z) \imp y = z)
$$
is the formula asserting that $x$ is the unique element satisfying $\delta(x)$.
\end{itemize}
We aim at proving that these reflection principles are equivalent to their local counterparts.

\begin{proposition}\label{prop:defrfn}
For each $k > n \geqslant 0$ we have
$\EA + \Rfn^n_{\Sigma_k}(T) \vdash \RFN^{K^{n+1}}_{\Sigma_k}(T)$.
\end{proposition}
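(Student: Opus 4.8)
The plan is to reason inside an arbitrary model $M \models \EA + \Rfn^n_{\Sigma_k}(T)$ (the argument is routinely formalizable in $\EA$). Fix a $\Sigma_{n+1}$-formula $\delta(x)$ and a $\Sigma_k$-formula $\phi(x)$, and put $\delta(x)$ into the normal form $\exists w\, \theta(w,x)$ with $\theta \in \Pi_n$. Suppose $a$ satisfies $\Def_\delta(a)$ and $\Box_T\phi(\num a)$; we want $\phi(a)$. The central idea is to eliminate the (possibly nonstandard) parameter $a$ in favour of the parameter-free sentence
$$
\psi \;:=\; \exists y\, \bigl(\delta(y) \land \phi(y)\bigr).
$$
Since $k > n$ we have $\Sigma_{n+1} \subseteq \Sigma_k$, and $\Sigma_k$ is closed, provably in $\EA$, under conjunction and existential quantification, so $\psi$ is, over $\EA$, (equivalent to) a $\Sigma_k$-sentence. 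Moreover, the uniqueness clause of $\Def_\delta(a)$ gives $\psi \eqv \phi(a)$ in $M$ (for the $\psi \imp \phi(a)$ direction one uses $\delta(a)$ together with ``at most one $\delta$-element''), so it is enough to derive $\psi$.

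To derive $\psi$ I would apply the relevant instance $[n]_T\psi \imp \psi$ of $\Rfn^n_{\Sigma_k}(T)$, so it remains to establish $[n]_T\psi$. This is where the matching of the superscripts is used: since $M \models \delta(a)$, fix $b$ with $\theta(b,a)$; then the $\Pi_n$-sentence $\theta(\num b,\num a)$ is true, i.e. $\True_{\Pi_n}(\gn{\theta(\num b,\num a)})$ holds. On the other hand $T$ proves $\forall w,x\,(\theta(w,x)\imp\delta(x))$, so by the derivability conditions $\EA \vdash \Box_T(\theta(\num b,\num a)\imp\delta(\num a))$ uniformly in $a,b$; combined with the assumption $\Box_T\phi(\num a)$ and $\EA$-provable existential generalization inside the box, this yields $\Box_T(\theta(\num b,\num a)\imp\psi)$. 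Taking $\pi=\gn{\theta(\num b,\num a)}$ in the definition $[n]_T\psi \equiv \exists\pi\,(\True_{\Pi_n}(\pi)\land\Box_T(\pi\imp\psi))$ gives $[n]_T\psi$. Then $\Rfn^n_{\Sigma_k}(T)$ yields $\psi$, hence $\phi(a)$, as required. The derivation is schematic in $\delta$ and $\phi$: each instance of $\RFN^{K^{n+1}}_{\Sigma_k}(T)$ follows from $\EA$ together with the single instance $[n]_T\psi\imp\psi$ of $\Rfn^n_{\Sigma_k}(T)$.

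I expect the main obstacle to be purely bookkeeping: checking carefully that $\psi$ is genuinely $\Sigma_k$ over $\EA$ (this is exactly where $k > n$ is needed, and one must be mildly careful contracting quantifier blocks via pairing), that $\delta$ admits the normal form $\exists w\,\theta$ with $\theta \in \Pi_n$, and that the manipulations involving $\True_{\Pi_n}$, the g\"odelnumber term $\gn{\theta(\num b,\num a)}$ and the derivability conditions go through uniformly in the possibly nonstandard parameters $a$ and $b$. All of this is standard, so once the reduction to $\psi$ is set up the argument is short.
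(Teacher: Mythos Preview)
Your argument is correct and is essentially the paper's proof: both introduce the auxiliary $\Sigma_k$-sentence $\psi = \exists y\,(\delta(y)\land\phi(y))$, establish $[n]_T\psi$ from $\Def_\delta(a)\land\Box_T\phi(\num a)$, apply the instance $[n]_T\psi\imp\psi$ of $\Rfn^n_{\Sigma_k}(T)$, and then use the uniqueness clause of $\Def_\delta$ to conclude $\phi(a)$. The only cosmetic difference is that the paper invokes provable $\Sigma_{n+1}$-completeness of $[n]_T$ to get $[n]_T\delta(\num a)$ in one step, whereas you unfold this by putting $\delta$ into normal form $\exists w\,\theta(w,x)$ and exhibiting the true $\Pi_n$-witness $\theta(\num b,\num a)$ explicitly.
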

\begin{proof}
Fix some $\Sigma_{n+1}$-formula $\delta(x)$ and $\Sigma_k$-formula $\phi(x)$. We will derive the corresponding axiom
of $\RFN^{K^{n+1}}_{\Sigma_k}(T)$.
Using provable $\Sigma_{n+1}$-completeness and an instance of $\Rfn^n_{\Sigma_k}(T)$ for the $\Sigma_k$-sentence 
$\exists u\,(\delta(u) \land \phi(u))$ (we use $n < k$ here)
we derive
\begin{align*}
\EA + \Rfn^n_{\Sigma_k}(T) \vdash \Def_\delta(x) \land \Box_T\phi(\num{x}) & \imp [n]_T\delta(\num{x}) \land \Box_T\phi(\num{x})\\
& \imp [n]_T (\delta(\num{x}) \land \phi(\num{x}))\\
& \imp [n]_T \exists u\,(\delta(u) \land \phi(u))\\
& \imp \exists u\,(\delta(u) \land \phi(u)).
\end{align*}
Since $\EA \vdash \Def_\delta(x) \land \delta(u) \imp x = u$, we get
\begin{align*}
\EA + \Rfn^n_{\Sigma_k}(T) \vdash \Def_\delta(x) \land \delta(u) \land \phi(u) & \imp
(x = u) \land \phi(u) \\
& \imp \phi(x),
\end{align*}
whence $\EA + \Rfn^n_{\Sigma_k}(T) \vdash \left(\Def_\delta(x)\land \exists u\,(\delta(u) \land \phi(u))\right) \imp \phi(x)$, and
so 
$$
\EA + \Rfn^n_{\Sigma_k}(T) \vdash \Def_\delta(x) \imp \left(\Box_T\phi(\num{x})  \imp \phi(x)\right),
$$ 
as required.
\end{proof}


In particular, the local $\Sigma_k$-reflection is equivalent to the uniform $\Sigma_k$-reflection with $\Sigma_1$-definable parameters.
\begin{corollary}\label{col:rfndef0}
For each $k > 0$ over $\EA$, $\Rfn_{\Sigma_k}(T) \equiv \RFN^{K^1}_{\Sigma_k}(T)$.
\end{corollary}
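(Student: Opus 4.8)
The plan is to obtain this corollary essentially for free from Proposition~\ref{prop:defrfn} together with a trivial converse implication. First I would observe that the relativized principle $\Rfn^0_{\Sigma_k}(T)$ is, over $\EA$, nothing but ordinary local $\Sigma_k$-reflection $\Rfn_{\Sigma_k}(T)$: the $0$-provability predicate $[0]_T$ coincides with $\Box_T$ up to provable equivalence in $\EA$, since every true bounded sentence is provable in $\EA$ and hence in $T$, and this is formalizable in $\EA$. Consequently, instantiating Proposition~\ref{prop:defrfn} at $n = 0$ (which is permitted exactly when $k > 0$) yields
\[
\EA + \Rfn_{\Sigma_k}(T) \vdash \RFN^{K^1}_{\Sigma_k}(T).
\]

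For the reverse inclusion I would argue directly. Fix a closed $\Sigma_k$-sentence $\psi$; the goal is $\EA + \RFN^{K^1}_{\Sigma_k}(T) \vdash \Box_T \psi \imp \psi$. Apply the instance of $\RFN^{K^1}_{\Sigma_k}(T)$ given by the defining formula $\delta(x) := (x = 0)$, which is bounded and a fortiori $\Sigma_1$, together with $\phi(x) := \psi$, viewing $\psi$ as a $\Sigma_k$-formula in which the variable $x$ does not occur (so that $\phi(\num{x})$ is literally $\psi$). Since $\EA \vdash \Def_\delta(0)$, the universal instance specialized to $x = 0$ collapses to $\Box_T \psi \imp \psi$. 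As this works for every closed $\Sigma_k$-sentence, we get $\EA + \RFN^{K^1}_{\Sigma_k}(T) \vdash \Rfn_{\Sigma_k}(T)$, and combining the two inclusions gives the asserted deductive equivalence over $\EA$.

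I do not expect any real obstacle here, since the substantive content is already packaged in Proposition~\ref{prop:defrfn}. The only points deserving a word of care are: (i) the identification $\Rfn^0_{\Sigma_k}(T) \equiv \Rfn_{\Sigma_k}(T)$ over $\EA$, which rests on $\EA$ (and hence $T$) provably deciding all bounded sentences correctly; and (ii) in the converse direction, the harmless but necessary remark that substituting a numeral for a non-occurring variable leaves the formula unchanged, so that the reflection instance really degenerates to local reflection for $\psi$.
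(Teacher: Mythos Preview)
Your argument is correct and matches the paper's proof essentially line for line: the forward direction is Proposition~\ref{prop:defrfn} at $n=0$ (with the implicit identification $\Rfn^0_{\Sigma_k}(T)\equiv\Rfn_{\Sigma_k}(T)$), and the converse uses the same $\delta(x):=(x=\num{0})$ trick to collapse an instance of $\RFN^{K^1}_{\Sigma_k}(T)$ to local reflection. The only difference is that you make the identification $[0]_T\equiv\Box_T$ explicit, which the paper leaves tacit.
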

\begin{proof}
By Proposition \ref{prop:defrfn} it suffices to show that 
$\EA + \RFN^{K^1}_{\Sigma_k}(T) \vdash \Box_T\phi \imp \phi$, where $\phi$ is a $\Sigma_k$-sentence. Define $\delta(x)$ to be the formula $(x = \num{0})$ 
and consider the corresponding axiom of $\RFN^{K^1}_{\Sigma_k}(T)$. 
Note that $\EA \vdash \Def_\delta(\num{0})$,
whence, by instantiating the axiom with $\num{0}$, we obtain $\Box_T \phi \imp \phi$.
\end{proof}

Let us recall several notions related to the models of arithmetic (for more details, see \cite{KPD}).
Given a model $M$ and a natural number $n$ we denote by $K^n(M)$ the substructure of $M$ consisting 
of all $\Sigma_n$-definable elements without parameters. Given a substructure $N \subseteq M$, we say
that $N$ is a \emph{$\Sigma_n$-elementary substructure} (denoted $N \prec_{\Sigma_n} M$)
if and only if for each $\Sigma_n$-formula $\sigma(x_1, \dots, x_m)$ and $a_1, \dots a_m \in N$
$$
N \models \sigma(a_1, \dots, a_m) \Longleftrightarrow M \models \sigma(a_1,\dots,a_m).
$$

The models $K^{n+1}(M)$ possess the following useful property (see Remark (i) after Theorem 2.1 in \cite{KPD}).
\begin{lemma}\label{lm:defmod}
For each $n \geqslant 0$, $K^{n+1}(M) \prec_{\Sigma_{n+1}} M$,
whenever  $M \models \ISi^-_n$.
\end{lemma}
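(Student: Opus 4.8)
The plan is to first verify that $K:=K^{n+1}(M)$ is a substructure of $M$, and then to obtain $\Sigma_{n+1}$-elementarity by an induction on the complexity of formulas whose only nontrivial ingredient is a ``definable witness'' property; the hypothesis $M\models\ISi^-_n$ will be used precisely for that ingredient.

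That $K$ is a substructure needs no induction. The element $0$ is defined by $x=0$, and if $a,b\in K$ are the unique solutions in $M$ of $\Sigma_{n+1}$-formulas $\alpha(x),\beta(x)$, then $a+b$, $a\cdot b$, $a'$ and $\exp(a)$ are the unique solutions of the $\Sigma_{n+1}$-formulas $\exists u\,\exists v\,(\alpha(u)\wedge\beta(v)\wedge x=u+v)$, and so on. The point here --- and it will recur below --- is that uniqueness of the new element is inherited from uniqueness of the solutions of $\alpha$ and $\beta$, so no uniqueness clause need be added and the defining formula remains $\Sigma_{n+1}$.

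For elementarity I would prove, by induction on $k\leqslant n+1$, that every $\Sigma_k$-formula, and hence every $\Pi_k$-formula, with parameters from $K$ has the same truth value in $K$ and in $M$. Since the induction hypothesis already supplies absoluteness at level $k$, the only new point at stage $k+1$ is downward absoluteness of $\Sigma_{k+1}$-formulas: if $M\models\exists y\,\pi(y,\bar a)$ with $\pi\in\Pi_k$ and $\bar a\in K$, one must produce a witness $b\in K$, for then $M\models\pi(b,\bar a)$ gives $K\models\pi(b,\bar a)$ by the induction hypothesis and hence $K\models\exists y\,\pi(y,\bar a)$. The base case $k=0$ is easy: parameter-free $\Delta_0$-sentences are already decided by the standard part, which lies in $K$, while for a $\Delta_0$-formula with parameters from $K$ one inducts on the formula, the only real step being a bounded quantifier $\exists y<t(\bar a)\,\psi$, handled by noting that the least witness $y_0$ is the unique solution of $y<t(\bar a)\wedge\psi(y,\bar a)\wedge\forall y'<y\,\neg\psi(y',\bar a)$; absorbing the $\Sigma_{n+1}$-definitions of the $a_i$ into this $\Delta_0$-matrix gives a $\Sigma_{n+1}$-definition of $y_0$, so $y_0\in K$ (the least witness existing since $M\models\mathsf{I}\Delta_0$).

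The inductive step is the main obstacle, and it is where $\ISi^-_n$ comes in. Given $M\models\exists y\,\pi(y,\bar a)$ with $\pi\in\Pi_k$, $k\leqslant n$, and each $a_i$ the unique solution of a $\Sigma_{n+1}$-formula $\beta_i$, I would pass to the parameter-free $\Sigma_{n+1}$-formula $R(y):=\exists\bar z\,\bigl(\bigwedge_i\beta_i(z_i)\wedge\pi(y,\bar z)\bigr)$, which by uniqueness defines in $M$ exactly the set of witnesses $y$; writing $R$ as $\exists\bar w\,R_0(y,\bar w)$ with $R_0\in\Pi_n$, I would take for $b$ the $y$-entry of the least code $c_0$ of a pair $(y,\bar w)$ satisfying $R_0$ in $M$. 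To conclude $b\in K$ one needs $(i)$ that such a least code exists and $(ii)$ that the property ``$c$ is least with $R_0((c)_{\bullet})$'' is equivalent to a $\Sigma_{n+1}$-formula --- the delicate issue being that the bounded quantifier ``$\forall c'<c$'' stands in front of the $\Sigma_n$-formula $\neg R_0$ and must be kept inside $\Sigma_n$, which costs one instance of $\Sigma_n$-collection. Both $(i)$ and $(ii)$ follow from $M\models\ISi^-_n$ applied to parameter-free formulas; this is in essence the content of the relevant results of \cite{KPD}, which single out $\ISi^-_n$ as the exact strength needed. Granting this, $b\in K$ and $M\models\pi(b,\bar a)$, so $K\models\pi(b,\bar a)$ by the induction hypothesis; downward absoluteness of $\Sigma_{k+1}$-formulas follows, upward absoluteness of $\Pi_{k+1}$-formulas follows by negation, and the case $k=n+1$ is precisely $K^{n+1}(M)\prec_{\Sigma_{n+1}}M$.
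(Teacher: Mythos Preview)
The paper does not give a proof of this lemma at all; it simply cites Remark~(i) after Theorem~2.1 of \cite{KPD}. Your sketch is essentially a reconstruction of the standard argument behind that citation: closure of $K^{n+1}(M)$ under the language operations, followed by a Tarski--Vaught style verification in which existential witnesses are replaced by $\Sigma_{n+1}$-definable least witnesses obtained by absorbing the parameters' definitions. The outline is correct, and the one genuinely delicate point---that $\ISi^-_n$ provides enough of a parameter-free least-number principle and enough collection to guarantee that the least witness exists and remains $\Sigma_{n+1}$-definable---you explicitly defer to \cite{KPD}, which is precisely what the paper does. So in the end your approach and the paper's coincide: both rest on the cited result.
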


We have the following generalization of Corollary \ref{col:rfndef0}.
\begin{proposition}\label{prop:rfndef}
For each $k > n \geqslant 0$ over
$\EA + \ISi^-_n$, $\Rfn^n_{\Sigma_k}(T) \equiv \RFN^{K^{n+1}}_{\Sigma_k}(T)$.
\end{proposition}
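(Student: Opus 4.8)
The plan is to prove the two directions separately. Proposition \ref{prop:defrfn} already gives $\EA + \Rfn^n_{\Sigma_k}(T) \vdash \RFN^{K^{n+1}}_{\Sigma_k}(T)$, which does not even need $\ISi^-_n$, so the only thing to establish is the converse implication $\EA + \ISi^-_n + \RFN^{K^{n+1}}_{\Sigma_k}(T) \vdash \Rfn^n_{\Sigma_k}(T)$. Here $\Rfn^n_{\Sigma_k}(T)$ is the schema $[n]_T\phi \imp \phi$ for $\Sigma_k$-sentences $\phi$. The natural approach is model-theoretic: take $M \models \EA + \ISi^-_n + \RFN^{K^{n+1}}_{\Sigma_k}(T)$, fix a $\Sigma_k$-sentence $\phi$, assume $M \models [n]_T\phi$, and argue $M \models \phi$.

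First I would pass to the submodel $N := K^{n+1}(M)$ of $\Sigma_{n+1}$-definable elements without parameters. By Lemma \ref{lm:defmod}, since $M \models \ISi^-_n$, we have $N \prec_{\Sigma_{n+1}} M$. The key observation is that every element $a \in N$ is, by definition, the unique element of $M$ satisfying some $\Sigma_{n+1}$-formula $\delta_a(x)$ (uniqueness can be arranged, e.g., by taking the least witness, or is built into the definition of being definable); and because $N \prec_{\Sigma_{n+1}} M$, the formula $\Def_{\delta_a}(x)$ — which is $\delta_a(x) \land \forall y,z\,(\delta_a(y)\land\delta_a(z)\imp y=z)$, a formula of complexity within reach — is satisfied by $a$ in $M$ as well (the relevant quantifier complexity here needs a moment's care, but since $\delta_a \in \Sigma_{n+1}$ the formula $\Def_{\delta_a}$ is Boolean over $\Sigma_{n+1}$, and $\Sigma_{n+1}$-elementarity together with the fact that $a$ is genuinely the unique witness in $M$ suffices). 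Thus for each $a \in N$, $M \models \Def_{\delta_a}(a)$.

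Next I would unwind $[n]_T$. We have $[n]_T\phi = \exists\pi\,(\True_{\Pi_n}(\pi) \land \Box_T(\pi\imp\phi))$, so $M \models [n]_T\phi$ gives an element $p \in M$ with $M \models \True_{\Pi_n}(p) \land \Box_T(p\imp\phi)$. The point of working inside $K^{n+1}(M)$ is that we want the relevant parameters to be definable. Here I would use that $\True_{\Pi_n}$ is $\Pi_n$, hence $\Sigma_{n+1}$, so the statement "$p$ is the least $\Pi_n$-truth-code such that $\Box_T(p\imp\phi)$" — using $\ISi^-_n$ to get the least such $p$, and noting $\Box_T(p\imp\phi)$ is $\Sigma_1$ in $p$ — picks out a $\Sigma_{n+1}$-definable element $p \in N$ with $M \models \Def_{\delta_p}(p)$. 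Now apply the axiom of $\RFN^{K^{n+1}}_{\Sigma_k}(T)$ with $\delta := \delta_p$ and with the $\Sigma_k$-formula $\psi(x) := \exists\pi\,(\True_{\Pi_n}(\pi)\land \pi = x \land \phi)$, or more directly run the computation of Proposition \ref{prop:defrfn} in reverse: from $\Def_{\delta_p}(p)$, $\True_{\Pi_n}(p)$ and $\Box_T(p\imp\phi)$ we want to conclude $\phi$. Concretely, the axiom instance $\forall x(\Def_{\delta_p}(x) \imp (\Box_T\theta(\num{x}) \imp \theta(x)))$ for a suitable $\Sigma_k$-formula $\theta(x)$ combining $\True_{\Pi_n}(x)$ with $\phi$ lets us transfer provability of $\theta(\num p)$ to truth of $\theta(p)$, and since $\True_{\Pi_n}(p)$ holds this yields $\phi$.

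The main obstacle I expect is the bookkeeping of logical complexity in the middle step: one must make sure that the definability of the auxiliary element $p$ (the $\Pi_n$-truth-code) is genuinely $\Sigma_{n+1}$, so that $\RFN^{K^{n+1}}_{\Sigma_k}(T)$ applies, and that the combined formula $\theta(x)$ to which the uniform reflection axiom is applied stays within $\Sigma_k$ — this is where $k > n$ is used, exactly as in Proposition \ref{prop:defrfn}. A second delicate point is formalizing the whole argument inside $\EA + \ISi^-_n$ rather than merely running it semantically: the use of $\ISi^-_n$ is precisely to guarantee existence of least witnesses of $\Sigma_{n+1}$-definable properties without parameters and, via Lemma \ref{lm:defmod}, the $\Sigma_{n+1}$-elementarity of $K^{n+1}(M)$. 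Once these complexity computations are pinned down, the argument is essentially the reverse of the chain of implications in Proposition \ref{prop:defrfn}, so it should go through cleanly.
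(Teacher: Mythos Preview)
Your proposal is correct and follows essentially the same model-theoretic route as the paper: pass to $K^{n+1}(M)$, locate a $\Sigma_{n+1}$-definable $\Pi_n$-witness $\pi$, and apply the reflection axiom to the $\Sigma_k$-formula $\True_{\Pi_n}(x)\imp\phi$. The one simplification in the paper is how the definable witness is obtained: instead of a least-element argument via $\ISi^-_n$, the paper just notes that $[n]_T\phi$ is itself a $\Sigma_{n+1}$-sentence, so by $K^{n+1}(M)\prec_{\Sigma_{n+1}}M$ it already holds in $K^{n+1}(M)$ and the existential witness $\pi$ automatically lies there --- this sidesteps the complexity bookkeeping you anticipated (which is delicate for small $n$) and makes transparent that $\ISi^-_n$ enters only through Lemma~\ref{lm:defmod}.
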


\begin{proof}
By Proposition \ref{prop:defrfn} $\EA + \Rfn^n_{\Sigma_k}(T) \vdash \RFN^{K^{n+1}}_{\Sigma_k}(T)$,
so we only prove the converse implication (over $\EA + \ISi^-_n$). Fix some $\Sigma_k$-sentence $\sigma$ and
a model $M$ of $\EA + \ISi^-_n + \RFN^{K^{n+1}}_{\Sigma_k}(T)$. 
We will show that $M \models [n]_T\sigma \imp \sigma$, whence the result follows,
so assume $M \models [n]_T\sigma$.
By Lemma \ref{lm:defmod} we have $K^{n+1}(M) \prec_{\Sigma_{n+1}} M$, implying
$K^{n+1}(M) \models [n]_T\sigma$, since $[n]_T\sigma$ is a 
$\Sigma_{n+1}$-sentence. It follows that there exists a $\pi \in K^{n+1}(M)$
such that
$$
K^{n+1}(M) \models \True_{\Pi_n}(\pi) \land \Box_T(\pi \imp \sigma).
$$
Using $K^{n+1}(M) \prec_{\Sigma_{n+1}} M$ again, we get
$M \models  \True_{\Pi_n}(\pi) \land \Box_T(\pi \imp \sigma)$,
whence 
\begin{equation}\label{eq:2}
M \models  \True_{\Pi_n}(\pi) \land \Box_T(\True_{\Pi_n}(\pi) \imp \sigma),
\end{equation}
since $\EA \vdash \forall \pi \in \Pi_n\, \Box_T \left(\pi \eqv \True_{\Pi_n}(\pi)\right)$ and $M \models \EA$.
Let $\delta(x)$ be a $\Sigma_{n+1}$-formula such that $M \models \Def_\delta(\pi)$. We have
$$
M \models \Def_\delta(\pi) \land \Box_T(\True_{\Pi_n}(\pi) \imp \sigma).
$$
Now, since $\True_{\Pi_n}(\pi) \imp \sigma$ is a $\Sigma_k$-formula, we can use $\RFN^{K^{n+1}}_{\Sigma_k}(T)$ in $M$ to obtain
$$
M \models \True_{\Pi_n}(\pi) \imp \sigma,
$$
whence $M \models \sigma$, as required, since $M \models \True_{\Pi_n}(\pi)$  by \eqref{eq:2}.
\end{proof}

Now we can give a relatively short model-theoretic proof of the following consequence 
of the so-called \emph{reduction property} (see \cite[Proposition 4.6]{Bek03}).

\begin{theorem}\label{th:s2conserv}
If $U$ is a $\Pi_{n+2}$-axiomatized extension of $\EA$, then
$$
U + \RFN_{\Sigma_{n+1}}(T) \equiv_{\Sigma_{n+2}} U + \Rfn^n_{\Sigma_{n+1}}(T).
$$ 
\end{theorem}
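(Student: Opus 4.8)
The plan is to prove the theorem by a model-theoretic argument in the spirit of Proposition~\ref{prop:rfndef}, exploiting the fact that $\RFN^{K^{n+1}}_{\Sigma_{n+1}}(T)$ and $\Rfn^n_{\Sigma_{n+1}}(T)$ are deductively equivalent over $\EA + \ISi^-_n$ (note that $k = n+1 > n$, so Proposition~\ref{prop:rfndef} applies). One inclusion is immediate: $U + \Rfn^n_{\Sigma_{n+1}}(T)$ is a subtheory of $U + \RFN_{\Sigma_{n+1}}(T)$ over $\EA$, since uniform reflection trivially implies its relativized local version (by provable $\Sigma_{n+1}$-completeness $[n]_T\phi$ follows from $\Box_T\phi$ plus true $\Pi_n$-axioms, and local $n$-reflection only concerns sentences). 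So the content is the $\Sigma_{n+2}$-conservativity in the other direction: every $\Sigma_{n+2}$-sentence provable in $U + \RFN_{\Sigma_{n+1}}(T)$ is already provable in $U + \Rfn^n_{\Sigma_{n+1}}(T)$.

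First I would fix a $\Sigma_{n+2}$-sentence $\sigma$ with $U + \Rfn^n_{\Sigma_{n+1}}(T) \not\vdash \sigma$ and produce a model $M \models U + \RFN_{\Sigma_{n+1}}(T) + \neg\sigma$. Write $\sigma = \exists x\, \pi(x)$ with $\pi \in \Pi_{n+1}$. Take any model $M_0 \models U + \Rfn^n_{\Sigma_{n+1}}(T) + \neg\sigma$. The key move is to pass to the submodel $K^{n+1}(M_0)$ of $\Sigma_{n+1}$-definable elements. Since $\Rfn^n_{\Sigma_{n+1}}(T)$ (being a relativized local reflection schema with $[n]_T$ having complexity $\Sigma_{n+1}$) entails $\ISi^-_n$ over $\EA$ — this is exactly the induction content one extracts from reflection, and here it is needed to apply Lemma~\ref{lm:defmod} — we get $K^{n+1}(M_0) \prec_{\Sigma_{n+1}} M_0$. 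Then $M := K^{n+1}(M_0)$ still satisfies $\neg\sigma$: a witness to $\sigma$ in $M$ would, by $\Sigma_{n+1}$-elementarity upward, be a witness in $M_0$, and being $\Sigma_{n+1}$-definable is irrelevant, we just need $M\models\neg\sigma$, which follows because $M \prec_{\Sigma_{n+1}} M_0$ and $\neg\sigma \in \Pi_{n+2}$ is a universally quantified $\Pi_{n+1}$-statement, preserved downward to $\Sigma_{n+1}$-elementary submodels. The model $M$ also inherits $U$: every $\Pi_{n+2}$-axiom of $U$ is preserved downward to a $\Sigma_{n+1}$-elementary submodel. Finally, by the reduction/saturation property of $K^{n+1}$ — namely every element of $M$ is $\Sigma_{n+1}$-definable without parameters in $M$ — the schema $\RFN^{K^{n+1}}_{\Sigma_{n+1}}(T)$ collapses to full $\RFN_{\Sigma_{n+1}}(T)$ in $M$: given any element $a$ of $M$, it is $\Def_\delta$ for some $\Sigma_{n+1}$-formula $\delta$, so the $\RFN^{K^{n+1}}$-instance applies to $a$; and $M \models \Rfn^n_{\Sigma_{n+1}}(T)$ transfers from $M_0$ by $\Sigma_{n+1}$-elementarity (the axioms of $\Rfn^n_{\Sigma_{n+1}}(T)$ are sentences of the form $[n]_T\phi \to \phi$, of complexity at most $\Pi_{n+2}$, but each such is a $\Sigma_{n+1}$-sentence $\neg[n]_T\phi$ disjoined with $\phi$... more carefully: $M_0\models\Rfn^n_{\Sigma_{n+1}}(T)$ and each instance is $\Pi_{n+2}$, preserved downward). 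Hence by Proposition~\ref{prop:rfndef}, $M \models \RFN^{K^{n+1}}_{\Sigma_{n+1}}(T) \equiv \Rfn^n_{\Sigma_{n+1}}(T)$, and by the collapse $M \models \RFN_{\Sigma_{n+1}}(T)$, so $M \models U + \RFN_{\Sigma_{n+1}}(T) + \neg\sigma$, giving $U + \RFN_{\Sigma_{n+1}}(T) \not\vdash \sigma$.

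The main obstacle I anticipate is verifying the two transfer claims cleanly: (i) that $M_0 \models \Rfn^n_{\Sigma_{n+1}}(T)$ implies $M := K^{n+1}(M_0) \models \Rfn^n_{\Sigma_{n+1}}(T)$, which requires checking that each axiom $[n]_T\phi \to \phi$ is preserved under $\prec_{\Sigma_{n+1}}$-submodels — this works because $[n]_T\phi$ is $\Sigma_{n+1}$ so its negation is preserved downward, and if $M_0 \models [n]_T\phi$ then $M_0\models\phi$ and one pushes $\phi$ down; but if $\phi$ is an arbitrary arithmetical sentence this needs care, so one should note $\Rfn^n_{\Sigma_{n+1}}(T)$ only asks $\phi\in\Sigma_{n+1}$ (or reduce to that case), making everything $\Pi_{n+2}$ and downward-absolute; and (ii) that $\Rfn^n_{\Sigma_{n+1}}(T) \vdash \ISi^-_n$ over $\EA$ so that Lemma~\ref{lm:defmod} is applicable — this is a standard fact (reflection yields induction) but should be cited or sketched. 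A secondary subtlety is making sure $U$ being $\Pi_{n+2}$-axiomatized is used exactly where claimed, in the downward preservation of the axioms of $U$ to the $\Sigma_{n+1}$-elementary submodel $K^{n+1}(M_0)$. Modulo these, the argument is a short diagram chase through Proposition~\ref{prop:rfndef} and Lemma~\ref{lm:defmod}.
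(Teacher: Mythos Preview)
Your proposal is correct and follows essentially the same route as the paper: take a model of $U + \Rfn^n_{\Sigma_{n+1}}(T)$, pass to $K^{n+1}$ using Lemma~\ref{lm:defmod} (available because $\Rfn^n_{\Sigma_{n+1}}(T)$ yields $\ISi^-_n$), and exploit $\Sigma_{n+1}$-elementarity to see that $K^{n+1}$ satisfies $U + \RFN_{\Sigma_{n+1}}(T)$. The only cosmetic difference is that you transfer $\Rfn^n_{\Sigma_{n+1}}(T)$ down and invoke $\RFN^{K^{n+1}}_{\Sigma_{n+1}}(T)$ inside $K^{n+1}(M_0)$ (using that every element there is $\Sigma_{n+1}$-definable in itself), whereas the paper applies Proposition~\ref{prop:defrfn} in the ambient model $M_0$ directly; both variants work for the same reasons.
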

\begin{proof}
The inclusion $\supseteq$ is clear from the definitions of the schemata and $[n]_T$, so we only prove the converse.
Assume $U  + \RFN_{\Sigma_{n+1}}(T) \vdash \sigma$, where $\sigma$ is a $\Sigma_{n+2}$-sentence.
Fix an arbitrary model $M \models U + \Rfn^n_{\Sigma_{n+1}}(T)$. 
We will show that $M \models \sigma$.
By point (i) of Theorem 1 from \cite{Bek99}, $\EA +  \Rfn^n_{\Sigma_{n+1}}(\EA) \vdash \ISi^-_n$, 
whence, certainly, $U + \Rfn^n_{\Sigma_{n+1}}(T) \vdash \ISi^-_n$, 
and so, in particular, $M \models \EA + \ISi^-_n$.
Lemma \ref{lm:defmod} then implies $K^{n+1}(M) \prec_{\Sigma_{n+1}} M$. 

We have $K^{n+1}(M) \models U$, since $M \models U$ and $U$ is a $\Pi_{n+2}$-extension 
(the truth of $\Pi_{n+2}$-sentences is preserved downwards by the relation $\prec_{\Sigma_{n+1}}$). 
We will show that $K^{n+1}(M) \models \RFN_{\Sigma_{n+1}}(T)$. In this case $K^{n+1}(M) \models U + \RFN_{\Sigma_{n+1}}(T)$, whence, by the assumption,
$K^{n+1}(M) \models \sigma$, so $M \models \sigma$, as required, since
$\sigma$ is a $\Sigma_{n+2}$-sentence (the truth of $\Sigma_{n+2}$-sentences is preserved upwards by the relation $\prec_{\Sigma_{n+1}}$).

The rest of the proof is close to that of Proposition \ref{prop:rfndef}.
Aiming for a contradiction, assume $K^{n+1}(M) \not \models \RFN_{\Sigma_{n+1}}(T)$, i.e.,
there exists a $\Sigma_{n+1}$-formula $\phi(x)$ and an element $a \in K^{n+1}(M)$ with
$
K^{n+1}(M) \models \Box_T \phi(\num{a}) \land \neg \phi(a),
$
which, using $K^{n+1}(M) \prec_{\Sigma_{n+1}} M$, implies
\begin{equation}\label{eq:3}
M \models \Box_T \phi(\num{a}) \land \neg \phi(a).
\end{equation}
Let $\delta(x)$ be a $\Sigma_{n+1}$-formula such that $M \models \Def_\delta(a)$. 
Since $M \models \EA + \Rfn^n_{\Sigma_{n+1}}(T)$, Proposition \ref{prop:defrfn} implies that 
$M \models \RFN^{K^{n+1}}_{\Sigma_{n+1}}(T)$ and, in particular,
$$
M \models \Def_\delta(a) \imp (\Box_T\phi(\num{a}) \imp \phi(a)),
$$
since $\phi(x)$ is a $\Sigma_{n+1}$-formula. Together with \eqref{eq:3} this yields 
$M \models \phi(a)$, which contradicts $M \models \neg \phi(a)$ in \eqref{eq:3}.
\end{proof}

Let us also note the following relationship between Proposition \ref{prop:rfndef} and the following proposition proved in \cite[Proposition 4.1]{CordFr}
(we use slightly different notation).
\begin{proposition}\label{prop:cordfr}
For each $n \geqslant 0$ over $\EA + \ISi^-_n$, 
$$
\IPi^-_{n+1} \equiv \mathsf{I}(\Sigma^-_{n+1}, K^{n+1}) \equiv \mathsf{I}(\Sigma_{n+1}, I^{n+1}, K^{n+1}_1).
$$
\end{proposition}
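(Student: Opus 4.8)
The plan is to prove the two equivalences as a short cycle of implications over $\EA + \ISi^-_n$, with one model-theoretic step (close to the proof of Proposition~\ref{prop:rfndef}) and the remaining steps being parameter-absorption arguments (in the spirit of the proof of Proposition~\ref{prop:defrfn}) together with a comparison of the admitted classes of definable elements. I read $\mathsf{I}(\Sigma^-_{n+1}, K^{n+1})$ as the schema of axioms $\Def_\delta(a) \imp \big[\varphi(0, a) \land \forall x\,(\varphi(x, a) \imp \varphi(x+1, a)) \imp \forall x\,\varphi(x, a)\big]$ with $\delta$ a $\Sigma_{n+1}$-formula and $\varphi$ a $\Sigma_{n+1}$-formula whose only parameter is the $\delta$-defined element~$a$, and similarly for the other two schemata. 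The equivalence $\IPi^-_{n+1} \equiv \mathsf{I}(\Sigma^-_{n+1}, K^{n+1})$ I would prove directly in both directions, and then deduce the equivalence with $\mathsf{I}(\Sigma_{n+1}, I^{n+1}, K^{n+1}_1)$ by sandwiching.

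For $\mathsf{I}(\Sigma^-_{n+1}, K^{n+1}) \vdash \IPi^-_{n+1}$ I argue by contraposition inside a model $M \models \EA + \ISi^-_n + \mathsf{I}(\Sigma^-_{n+1}, K^{n+1})$. If $M \not\models \IPi^-_{n+1}$, fix a witnessing parameter-free $\Pi_{n+1}$-formula $\pi(x)$ and put $\sigma(x) := \neg\pi(x)$, a parameter-free $\Sigma_{n+1}$-formula, so that $M$ satisfies $\neg\sigma(0)$, $\forall x\,(\sigma(x+1) \imp \sigma(x))$ and $\exists x\,\sigma(x)$. By Lemma~\ref{lm:defmod}, $K^{n+1}(M) \prec_{\Sigma_{n+1}} M$, so the $\Sigma_{n+1}$-sentence $\exists x\,\sigma(x)$ holds in $K^{n+1}(M)$ and we may pick a witness $a \in K^{n+1}(M)$; as an element of $K^{n+1}(M)$ it is defined uniquely in $M$ by some $\Sigma_{n+1}$-formula $\delta$, so $M \models \Def_\delta(a)$, and $M \models \sigma(a)$ by $\Sigma_{n+1}$-elementarity. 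Applying the instance of $\mathsf{I}(\Sigma^-_{n+1}, K^{n+1})$ for $\delta$, the parameter $a$ and the $\Sigma_{n+1}$-formula $\theta(y, a) := \sigma(a \mathbin{\dot{-}} y)$ (truncated subtraction) gives $M \models \forall y\,\theta(y, a)$, because $\theta(0, a) \eqv \sigma(a)$ and $\theta(y, a) \imp \theta(y+1, a)$ hold — the latter since $\sigma(a \mathbin{\dot{-}} y) \imp \sigma((a \mathbin{\dot{-}} y) \mathbin{\dot{-}} 1)$ by the downward closure of $\sigma$, the case $a \mathbin{\dot{-}} y = 0$ being vacuous. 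Instantiating $y := a+1$ now yields $M \models \sigma(0)$, contradicting $M \models \neg\sigma(0)$.

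For the converse $\IPi^-_{n+1} \vdash \mathsf{I}(\Sigma^-_{n+1}, K^{n+1})$ I absorb the definable parameter into a $\Pi_{n+1}$-formula. Fix $\delta, \varphi$ as above and argue in $M \models \EA + \ISi^-_n + \IPi^-_{n+1}$, assuming $\Def_\delta(a)$, $\varphi(0, a)$ and $\forall x\,(\varphi(x, a) \imp \varphi(x+1, a))$. Consider the parameter-free formula $\rho(x) := \forall b\,(\delta(b) \imp \neg\varphi(x, b))$; since $\neg\delta(b) \lor \neg\varphi(x, b)$ is $\Pi_{n+1}$, so is $\rho$. Using $\Def_\delta(a)$ one has in $M$ the equivalence $\rho(x) \eqv \neg\varphi(x, a)$, hence $\neg\rho(0)$ and $\forall x\,(\rho(x+1) \imp \rho(x))$. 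If $\exists x\,\rho(x)$ held, then $\IPi^-_{n+1}$ — in the equivalent form of the least-number principle for parameter-free $\Pi_{n+1}$-formulas — would furnish a least $x$ with $\rho(x)$, which is impossible: it is not $0$ since $\neg\rho(0)$, and if it is $x' + 1$ then $\rho(x'+1) \imp \rho(x')$ contradicts minimality. So $M \models \forall x\,\neg\rho(x)$, i.e. $M \models \forall x\,\varphi(x, a)$. The same absorption gives $\IPi^-_{n+1} \vdash \mathsf{I}(\Sigma_{n+1}, I^{n+1}, K^{n+1}_1)$ once one checks that the definitions admitted in $I^{n+1}$ and $K^{n+1}_1$ (roughly, provably functional $\Sigma_{n+1}$-definitions and least-element definitions) can, possibly after a normalization of the defining formula, still be absorbed into a $\Pi_{n+1}$-formula; and the reverse inclusion $\mathsf{I}(\Sigma_{n+1}, I^{n+1}, K^{n+1}_1) \vdash \mathsf{I}(\Sigma^-_{n+1}, K^{n+1})$ follows by re-presenting a $K^{n+1}$-parameter $a$, under the hypothesis $\Def_\delta(a)$, as the least element of the $\Sigma_{n+1}$-set defined by $\delta$, which is an admissible parameter of the former schema.

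The step I expect to be the main obstacle is precisely this last bookkeeping around $\mathsf{I}(\Sigma_{n+1}, I^{n+1}, K^{n+1}_1)$: one must pin down the admitted classes of definable elements so that, on one hand, the absorbed formula $\rho$ stays $\Pi_{n+1}$ (so that $\IPi^-_{n+1}$ already suffices), and, on the other, every $K^{n+1}$-parameter arising in the model-theoretic step can be re-presented inside these classes — the delicate point being the logical complexity of ``least element'' definitions, which is where the collection consequences available in $\EA + \ISi^-_n$ enter. By contrast, the equivalence $\IPi^-_{n+1} \equiv \mathsf{I}(\Sigma^-_{n+1}, K^{n+1})$ itself is routine once Lemma~\ref{lm:defmod} and the least-number formulation of $\IPi^-_{n+1}$ are in place.
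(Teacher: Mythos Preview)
The paper does not give its own proof of this proposition: it is quoted as \cite[Proposition~4.1]{CordFr}, and the text explicitly says ``for the formal definitions of this schema and $\mathsf{I}(\Sigma_{n+1}, I^{n+1}, K^{n+1}_1)$, see \cite{CordFr}''. So there is nothing in the paper to compare your argument against; the proposition is imported as a black box.

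That said, a couple of comments on your sketch. Your argument for $\IPi^-_{n+1} \equiv \mathsf{I}(\Sigma^-_{n+1}, K^{n+1})$ has the right shape: one direction absorbs the $\Sigma_{n+1}$-definable parameter into a parameter-free $\Pi_{n+1}$-formula, and the other finds a $\Sigma_{n+1}$-definable witness to a failure of $\Pi_{n+1}$-induction via Lemma~\ref{lm:defmod}. There is a small slip in the absorption direction: you invoke ``the least-number principle for parameter-free $\Pi_{n+1}$-formulas'' as an equivalent form of $\IPi^-_{n+1}$, but that is not the right pairing (least-number for $\Pi_{n+1}$ naturally matches $\Sigma_{n+1}$-induction, and the parameter-free equivalences are delicate). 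What you actually need is just $\ISi^-_{n+1}$-induction on $\neg\rho$, which is $\Sigma_{n+1}$ and parameter-free; this is available since $\IPi^-_{n+1} \vdash \ISi^-_{n+1}$.

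For the second equivalence you are, as you acknowledge, guessing at the meaning of $I^{n+1}$ and $K^{n+1}_1$. Since the paper defers entirely to \cite{CordFr} for these definitions (and your reading of them does not match the notation used there), the ``sandwiching'' paragraph cannot be assessed against the paper and would in any case have to be redone against the actual definitions before it could count as a proof.
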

Here $\mathsf{I}(\Sigma^-_{n+1}, K^{n+1})$ is the local variant of $\Sigma_{n+1}$-induction schema, where the conclusion of the
induction axiom is relativized to $\Sigma_{n+1}$-definable elements 
(for the formal definitions of this schema and $\mathsf{I}(\Sigma_{n+1}, I^{n+1}, K^{n+1}_1)$, see \cite{CordFr}).
Thus, Proposition \ref{prop:rfndef} can be seen as an analogue of Proposition \ref{prop:cordfr} but
for the reflection principles instead of the induction schemata.

The connection between the two propositions is based on the following fact
(see \cite[Theorem 1 (ii)]{Bek99} for $n > 0$ and \cite[Theorem 5.2]{CordFr} for $n = 0$),
for each $n \geqslant 0$
$$
\IPi^-_{n+1} \equiv \EA + \Rfn^n_{\Sigma_{n+2}}(\EA).
$$
In view of this result, Propositions \ref{prop:rfndef} and \ref{prop:cordfr} we have the following 
\begin{corollary}\label{col:cordfr}
For each $n \geqslant 0$ over $\EA + \ISi^-_n$,
$$
\RFN^{K^{n+1}}_{\Sigma_{n+2}}(\EA) \equiv \mathsf{I}(\Sigma^-_{n+1}, K^{n+1}) \equiv \mathsf{I}(\Sigma_{n+1}, I^{n+1}, K^{n+1}_1).
$$
\end{corollary}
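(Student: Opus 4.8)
The plan is to obtain the displayed chain of equivalences by splicing together the three ingredients collected just before the statement: Proposition~\ref{prop:rfndef}, Proposition~\ref{prop:cordfr}, and the identity $\IPi^-_{n+1} \equiv \EA + \Rfn^n_{\Sigma_{n+2}}(\EA)$. Since Proposition~\ref{prop:cordfr} already delivers the rightmost two equivalences $\mathsf{I}(\Sigma^-_{n+1}, K^{n+1}) \equiv \mathsf{I}(\Sigma_{n+1}, I^{n+1}, K^{n+1}_1)$ (and both with $\IPi^-_{n+1}$) over $\EA + \ISi^-_n$, the only thing left to establish is that, over $\EA + \ISi^-_n$, the schema $\RFN^{K^{n+1}}_{\Sigma_{n+2}}(\EA)$ is equivalent to $\IPi^-_{n+1}$.

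First I would invoke Proposition~\ref{prop:rfndef} with $T = \EA$ and $k = n+2$; the hypothesis $k > n \geqslant 0$ is met, so over $\EA + \ISi^-_n$ we obtain $\Rfn^n_{\Sigma_{n+2}}(\EA) \equiv \RFN^{K^{n+1}}_{\Sigma_{n+2}}(\EA)$. Next I would use the quoted identity $\IPi^-_{n+1} \equiv \EA + \Rfn^n_{\Sigma_{n+2}}(\EA)$. Here one also wants to know that moving to the base theory $\EA + \ISi^-_n$ costs nothing: this follows since $\EA + \Rfn^n_{\Sigma_{n+1}}(\EA) \vdash \ISi^-_n$ (point (i) of Theorem~1 of \cite{Bek99}, as already used in the proof of Theorem~\ref{th:s2conserv}) and $\Rfn^n_{\Sigma_{n+2}}(\EA)$ subsumes $\Rfn^n_{\Sigma_{n+1}}(\EA)$, so $\IPi^-_{n+1} \vdash \ISi^-_n$. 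Hence over $\EA + \ISi^-_n$ we have $\IPi^-_{n+1} \equiv \EA + \Rfn^n_{\Sigma_{n+2}}(\EA) \equiv \RFN^{K^{n+1}}_{\Sigma_{n+2}}(\EA)$. Combining this with Proposition~\ref{prop:cordfr} then yields the full chain.

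There is no real obstacle here: the corollary is a bookkeeping consequence of results already in hand, and the only point requiring a moment's care is the base-theory matching just described, namely checking that $\IPi^-_{n+1}$ (equivalently $\EA + \Rfn^n_{\Sigma_{n+2}}(\EA)$) already proves $\ISi^-_n$, so that all the theories involved may legitimately be compared over the common base $\EA + \ISi^-_n$.
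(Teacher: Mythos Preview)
Your proposal is correct and matches the paper's own justification exactly: the corollary is stated as an immediate consequence of Proposition~\ref{prop:rfndef}, Proposition~\ref{prop:cordfr}, and the identity $\IPi^-_{n+1} \equiv \EA + \Rfn^n_{\Sigma_{n+2}}(\EA)$, with no further argument given. Your additional remark on base-theory matching is a harmless (and helpful) elaboration of a point the paper leaves implicit.
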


This may be contrasted with the famous result by D. Leivant \cite{Lei83} and H. Ono \cite{Ono87} (cf. \cite[Theorem 7]{Bek05en}),
that $\EA + \RFN_{\Sigma_{n+2}}(\EA) \equiv \ISi_{n+1}$ for each $n \geqslant 0$, and a result by L.~Beklemishev (see \cite[Theorem 1 (i)]{Bek99}), 
that $\EA + \Rfn^{n+1}_{\Sigma_{n+2}}(\EA) \equiv \ISi^-_{n+1}$ for each $n \geqslant 0$.
In our case we also have the equivalence between certain forms of $\Sigma_{n+2}$-reflection for $\EA$ and $\Sigma_{n+1}$-induction,
namely, for the versions of reflection and induction restricted to $\Sigma_{n+1}$-definable elements.

\section{Local reflection, $\omega$-consistency and 1-provability}

In this section we show how Smory{\'n}ski's characterization of the local $\omega$-consistency assertions in terms of reflection principles
can be used to give a quick solution to the problems from \cite{KayKot} mentioned in the introduction.
We also prove that the schema $\omegacon^{\mathsf{Th}}(T)$, introduced in \cite{KayKot}, is actually equivalent to $\RFN(T)$ (Theorem \ref{th:oconth}). 
In addition, we study the proof-theoretic strength of 
Feferman's theorem and its generalized versions (Theorem \ref{th:rfnsi2}). 
In particular, we obtain an alternative characterization of $\ISi_1$ in terms of Feferman's theorem for $\EA$ (Corollary \ref{cor:isi1rfn}).

The following characterization of the local $\omega$-consistency schemata in terms of reflection principles 
was obtained by C.~Smory{\'n}ski (see \cite[Theorem 1.1.c]{Smo77a}).
\begin{theorem}\label{th:ocon-s2rfn}
Over $\EA$,
\begin{enumerate}

\item[(i)] $\omegacon_{\Sigma_n}(T) \equiv \Rfn_{\Sigma_2}(T + \RFN_{\Sigma_n}(T))$ for each $n > 0$,
\item[(ii)] $\omegacon_{\Sigma_0}(T) \equiv \Rfn_{\Sigma_1}(T)$.

\end{enumerate}

\end{theorem}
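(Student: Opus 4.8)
\noindent\textbf{Proof proposal for Theorem~\ref{th:ocon-s2rfn}.}

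The plan is to unfold the definitions of all the schemata involved in terms of the strong provability predicates $[n]_T$ and $\Box_T$, and then to reduce both equivalences to simple manipulations with provable $\Sigma_k$-completeness and the derivability conditions. For part~(ii), I would first note that $\omegacon_{\Sigma_0}(T)$ asserts, for each bounded $\phi(x)$, that $\forall x\, \Box_T\phi(\num{x}) \imp \Diamond_T\forall x\,\phi(x)$. The point is that $\forall x\, \Box_T\phi(\num{x})$ is equivalent over $\EA$ to $\Box_T\forall x\,\phi(x)$: the nontrivial direction, ``$\forall x\,\Box_T\phi(\num{x}) \imp \Box_T\forall x\,\phi(x)$'', is exactly what provable $\Sigma_1$-completeness buys us when $\forall x\,\phi(x)$ is $\Pi_1$ --- indeed, from $\Box_T\forall x\, \phi(\num{x})$ being refutable we would get $\Diamond_T \exists x\,\neg\phi(\num{x})$, hence by $\Sigma_1$-completeness an actual witness, contradicting the hypothesis; more carefully one argues inside $\EA$ using small reflection (Proposition~\ref{prop:smrfn}) to pass from $\forall x$ outside the box to $\forall x$ inside. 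Granting this, $\omegacon_{\Sigma_0}(T)$ becomes $\Box_T\psi \imp \Diamond_T\psi$ for $\psi \in \Pi_1$, equivalently $\Box_T\psi \imp \neg\Box_T\neg\psi$; contraposing and using that $\neg\psi$ ranges over $\Sigma_1$-sentences, this is precisely $\Box_T\neg\psi \imp \neg\Box_T\psi$, i.e. ``$\Box_T$ followed by consistency'', which standard arguments (via $\Diamond_T\chi \imp \neg\Box_T\neg\chi$ and $\Sigma_1$-completeness) show is equivalent to $\Rfn_{\Sigma_1}(T)$: a $\Sigma_1$-sentence $\chi$ provable in $T$ must be true, since otherwise $\neg\chi$ is a true $\Pi_1$-sentence, hence $T$-provable by $\Sigma_1$-completeness applied in the metatheory --- formalized, this is $\Box_T\chi \land \neg\chi \imp \Box_T\chi \land \Box_T\neg\chi \imp \Box_T\bot$, contradicting $\Diamond_T\top$, but one has to be careful that $\Con(T)$ is not assumed; the clean route is to observe $\Rfn_{\Sigma_1}(T) \equiv \{\,\Box_T\chi \imp \chi : \chi \in \Sigma_1\,\}$ and match it formula-by-formula with the contrapositive of $\omegacon_{\Sigma_0}(T)$.

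For part~(i), I would argue both inclusions over $\EA$. For $\subseteq$: work in $\EA + \omegacon_{\Sigma_n}(T)$ and take a $\Sigma_2$-sentence $\chi$ with $\Box_{T+\RFN_{\Sigma_n}(T)}\chi$; since $\RFN_{\Sigma_n}(T)$ is (equivalent over $\EA$ to) a single $\Pi_{n+1}$-sentence, namely $\forall x(\Box_T\neg\True_{\Pi_n}(x)\text{-style instance})$ --- more precisely $\RFN_{\Sigma_n}(T) \equiv \neg[n-1]_T\bot$ when $n>0$, which is $\Sigma_n$ --- hmm, let me instead use that $\RFN_{\Sigma_n}(T)$ is a $\Pi_{n+1}$-schema whose universal instance $\rho := \forall x(\Box_T\phi_0(\num x)\imp\phi_0(x))$ with $\phi_0 = \neg\True_{\Pi_n}$ is a single $\Pi_{n+1}$-sentence axiomatizing it over $\EA$. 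Then $\Box_{T+\rho}\chi$ is $\EA$-provably equivalent to $\Box_T(\rho\imp\chi)$. Writing $\rho = \forall x\,\theta(x)$ with $\theta \in \Pi_n$, one has $\Box_T(\rho \imp \chi) \imp \forall x\, \Box_T(\theta(\num x) \imp \chi')$ for a suitable rearrangement, and feeding this into $\omegacon_{\Sigma_n}(T)$ (whose hypothesis concerns $\forall x\,\Box_T\psi(\num x)$ for $\Sigma_n$ --- here I need the dual/$\Pi_n$ form, but $\omegacon_{\Sigma_n}$ and $\omegacon_{\Pi_n}$ coincide over $\EA$ by the usual complexity juggling, since a $\Pi_n$ formula is $\neg$ of $\Sigma_n$) yields $\Diamond_T(\rho \land \neg\chi')$ or directly $\Diamond_T(\rho\imp\chi) \text{-contradiction}$, giving $\chi$. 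For $\supseteq$: in $\EA + \Rfn_{\Sigma_2}(T+\RFN_{\Sigma_n}(T))$, given $\Sigma_n$ (or $\Pi_n$) $\phi(x)$ with $\forall x\,\Box_T\phi(\num x)$, note $\forall x\,\Box_T\phi(\num x)$ is $\Pi_{n+1}$, and apply small reflection inside $T+\RFN_{\Sigma_n}(T)$: there, $\forall x\,\Box_T\phi(\num x) \imp \forall x\,\phi(x)$, so $T + \RFN_{\Sigma_n}(T) + \forall x\,\Box_T\phi(\num x) \vdash \forall x\,\phi(x)$, hence $\Box_{T+\RFN_{\Sigma_n}(T)}\bigl((\forall x\,\Box_T\phi(\num x)) \imp \forall x\,\phi(x)\bigr)$ by formalized deduction; combined with the hypothesis and the consistency of $T+\RFN_{\Sigma_n}(T)$-fragment delivered by $\Rfn_{\Sigma_2}$, we get $\Diamond_T\forall x\,\phi(x)$, which is $\omegacon_{\Sigma_n}(T)$.

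The main obstacle I anticipate is the careful complexity bookkeeping: getting $\forall x\,\Box_T\phi(\num x)$ to be $\Pi_{n+1}$ and thus amenable to $\RFN_{\Sigma_n}$/small-reflection, matching the $\Sigma_2$-level of the outer $\Rfn$ in part~(i) with the $\Diamond_T$ in the definition of $\omegacon$, and reconciling the ``$\Sigma_n$'' in $\omegacon_{\Sigma_n}$ with whichever of $\Sigma_n,\Pi_n,\Sigma_{n+1}$ actually appears when one pushes quantifiers through $\Box_T$. The right bookkeeping device is to express everything via $[n-1]_T$ and $[n]_T$: $\RFN_{\Sigma_n}(T) \equiv \neg[n-1]_T\bot$, $\RFN_{\Sigma_{n+1}}(\cdot)\equiv\neg[n]_{\cdot}\bot$, and then $\omegacon_{\Sigma_n}(T)$ unwinds to a statement about $[n-1]_T$-provability of $\Pi_n$ truths, at which point the equivalence with $\Rfn_{\Sigma_2}$ of the reflection-augmented theory should fall out by the same two-line derivability-condition argument as in part~(ii), with $n=0$ being the degenerate case where $[{-1}]_T$ is just $\Box_T$ and $\RFN_{\Sigma_0}(T)$ is vacuous, so $T + \RFN_{\Sigma_0}(T) \equiv T$ and $\Rfn_{\Sigma_2}(T)$ collapses appropriately --- but since (ii) separately states the $n=0$ answer is $\Rfn_{\Sigma_1}(T)$ rather than $\Rfn_{\Sigma_2}(T)$, the case split is genuine and must be handled by hand as above.
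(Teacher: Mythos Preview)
The paper does not give its own proof of this theorem: it is quoted as Smory\'nski's result with a bare citation to \cite{Smo77a}. So there is no ``paper's proof'' to compare against, and your proposal must be judged on its own merits.

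There is a genuine error in your treatment of part~(ii). You claim that for bounded $\phi(x)$ the equivalence
\[
\forall x\,\Box_T\phi(\num{x}) \;\eqv\; \Box_T\forall x\,\phi(x)
\]
holds over $\EA$, and that the nontrivial direction follows from provable $\Sigma_1$-completeness or small reflection. This is false. Take $\phi(x) := \neg\Prf_T(x,\gn{\bot})$, so that $\forall x\,\phi(x)$ is $\Con(T)$. In the standard model (for any consistent~$T$) every $\neg\Prf_T(\num{n},\gn{\bot})$ is a true $\Delta_0$-sentence and hence $T$-provable, so $\forall x\,\Box_T\phi(\num{x})$ holds; yet $\Box_T\Con(T)$ fails by G\"odel's second incompleteness theorem. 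Thus $\EA$ cannot prove the implication you need. Your informal justification (``from $\Box_T\forall x\,\phi(x)$ being refutable we would get $\Diamond_T\exists x\,\neg\phi(x)$, hence by $\Sigma_1$-completeness an actual witness'') conflates $\Diamond_T$ with truth: $\Diamond_T\exists x\,\neg\phi(x)$ does not yield an actual $x$ with $\neg\phi(x)$. Small reflection is likewise no help here; it lets one pass between the two \emph{forms} of uniform reflection, not collapse $\forall x\,\Box_T$ into $\Box_T\forall x$.

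The correct route for~(ii) avoids this equivalence entirely. For $\Rfn_{\Sigma_1}(T)\Rightarrow\omegacon_{\Sigma_0}(T)$: from $\forall x\,\Box_T\phi(\num{x})$ and $\Box_T\exists x\,\neg\phi(x)$, apply $\Rfn_{\Sigma_1}(T)$ to the $\Sigma_1$-sentence $\exists x\,\neg\phi(x)$ to get a witness $a$ with $\neg\phi(a)$; then $\Sigma_1$-completeness gives $\Box_T\neg\phi(\num{a})$, whence $\Box_T\bot$, contradicting $\Rfn_{\Sigma_1}(T)$. For the converse: given $\Sigma_1$-sentence $\sigma=\exists x\,\psi(x)$ with $\psi$ bounded, assume $\Box_T\sigma$ and $\neg\sigma$; then each $\neg\psi(\num{x})$ is true $\Delta_0$, so $\forall x\,\Box_T\neg\psi(\num{x})$, and $\omegacon_{\Sigma_0}(T)$ yields $\Diamond_T\neg\sigma$, contradicting $\Box_T\sigma$.

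Your sketch for part~(i) is closer in spirit to a workable argument, but it is still loose: you write $\rho=\forall x\,\theta(x)$ with $\theta\in\Pi_n$ when $\theta$ should be $\Sigma_n$; the passage ``$\Box_T(\rho\to\chi)\to\forall x\,\Box_T(\theta(\num{x})\to\chi')$'' is undefined (what is $\chi'$?); and the invocation of $\omegacon_{\Sigma_n}$ is not matched to the form of its hypothesis. The cleaner line is the one you gesture at near the end: use that $\RFN_{\Sigma_n}(T)$ is $\EA$-equivalent to the single $\Pi_n$-sentence $\neg[n{-}1]_T\bot$, observe that each instance of $\omegacon_{\Sigma_n}(T)$ is $\EA$-equivalent to a $\Sigma_2$-sentence, and for the direction $\Rfn_{\Sigma_2}(T+\RFN_{\Sigma_n}(T))\Rightarrow\omegacon_{\Sigma_n}(T)$ argue via the contrapositive form (for $\Pi_2$-sentences~$\pi$, $\pi\to\Diamond_{T+\RFN_{\Sigma_n}(T)}\pi$) together with the fact that $T+\RFN_{\Sigma_n}(T)\vdash\forall x\,\Box_T\phi(\num{x})\to\forall x\,\phi(x)$ and that $\Diamond_{T+\RFN_{\Sigma_n}(T)}$ implies $\Diamond_T$. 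The reverse direction requires a genuinely separate argument which your proposal does not yet supply.
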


The following theorem, which solves Problem 8.4, is a consequence of Theorem~\ref{th:ocon-s2rfn}.
\begin{theorem}\label{th:oconrcon}
$\EA + \omegacon_{\Sigma_1}(T) \vdash \Con(T + \Rfn(T)).$
\end{theorem}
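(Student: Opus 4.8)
The plan is to unwind the right-hand side of Theorem~\ref{th:ocon-s2rfn}(i) with $n=1$ and then apply a standard formalized-soundness argument. By Theorem~\ref{th:ocon-s2rfn}(i), over $\EA$ we have $\omegacon_{\Sigma_1}(T) \equiv \Rfn_{\Sigma_2}(U)$, where $U := T + \RFN_{\Sigma_1}(T)$. So it suffices to show $\EA + \Rfn_{\Sigma_2}(U) \vdash \Con(T + \Rfn(T))$. The key observation is that $\Con(T + \Rfn(T))$ can be reduced, provably in $\EA$, to the consistency of $T$ augmented by finitely many instances of $\Rfn(T)$, and each such finite fragment is itself (essentially) a reflection statement that $U$ can handle.

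The main steps, in order. First, I would recall the cut-elimination/partial-conservativity fact that $\Rfn(T)$ is, over $\EA$, axiomatized by the scheme $\{\Rfn_{\Sigma_n}(T) : n \geq 1\}$, and more precisely that $\EA \vdash \Con(T + \Rfn(T)) \leftrightarrow \forall n\, \Con(T + \Rfn_{\Sigma_n}(T))$, i.e.\ $\neg\Con(T+\Rfn(T))$ is a $\Sigma_1$-statement asserting the existence of a number $n$ and a $(T+\Rfn_{\Sigma_n}(T))$-proof of $\bot$. Second, working in $\EA + \Rfn_{\Sigma_2}(U)$ and arguing for a contradiction, suppose $\neg\Con(T+\Rfn(T))$; then there is a standard-looking $n$ with $\Box_T\big(\bigwedge_{\phi \in F}(\Box_T\phi \to \phi) \to \bot\big)$ for a finite set $F$ of $\Sigma_n$-sentences. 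Third — and this is the heart — I want to show that $U = T + \RFN_{\Sigma_1}(T)$ proves each finite conjunction of instances of $\Rfn_{\Sigma_n}(T)$ is consistent with $T$, uniformly; the point is that $\RFN_{\Sigma_1}(T)$ (equivalently $\Con_1(T)$, i.e.\ $\neg[0]_T\bot$, which is $\Pi_1$) plus provable $\Sigma_1$-completeness lets $U$ verify, for any given $\Sigma_n$-sentence $\psi$, the disjunction "$\psi$ is true" or "$T + \psi$ is consistent" — hence $U$ proves $\Box_T\phi \to \Diamond_T\phi$ is consistent with $T$ for the relevant instances, so $U \vdash \Con(T + \{\Box_T\phi \to \phi : \phi \in F\})$, and this latter statement, being $\Pi_1$, is in fact $\Sigma_2$, so $\Rfn_{\Sigma_2}(U)$ applies in the form "$\Box_U(\text{that }\Sigma_2\text{-sentence}) \to$ it holds".

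The step I expect to be the main obstacle is the third one: making precise, and formalizing in $\EA$, the claim that $U$ uniformly proves the consistency of $T$ with any finite collection of local-reflection instances, and matching the logical complexity so that $\Rfn_{\Sigma_2}(U)$ is exactly the right tool. One has to be careful that the relevant consistency statement $\Con(T + \{\Box_T\phi \to \phi : \phi \in F\})$ is $\Pi_1$ in $F$ but that its \emph{provability in $U$} is what we reflect on, and that the quantifier over $n$ (or over the finite set $F$) does not push us out of $\Sigma_2$. A clean way around this is to use the strong provability predicate $[1]_T$: since $\EA$ proves $[1]_T$ satisfies L\"ob's conditions and $\RFN_{\Sigma_1}(T) \equiv \neg[1]_T\bot$, and since each $\Rfn(T)$-instance $\Box_T\phi\to\phi$ with $\phi \in \Sigma_n$ becomes, modulo truth predicates, provable from true $\Pi_1$-facts together with $T$, the theory $T + \Rfn(T)$ embeds into the $1$-provability logic over $T$ in a way that $U$ recognizes; then $\Con(T+\Rfn(T))$ follows from $\Con_1(T)$-style reasoning reflected through $\Rfn_{\Sigma_2}(U)$. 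If the uniform formalization proves delicate, I would fall back on showing instead the equivalent $\EA + \Rfn_{\Sigma_2}(U) \vdash \forall n\,\Con(T + \Rfn_{\Sigma_n}(T))$ by a single application of $\Rfn_{\Sigma_2}(U)$ to the $\Pi_1$-formula $\forall n\,\Con(T+\Rfn_{\Sigma_n}(T))$ — which is $\Pi_1 \subseteq \Sigma_2$ — after checking that $U \vdash \forall n\,\Con(T+\Rfn_{\Sigma_n}(T))$, the latter being a formalized version of the classical fact that $\RFN_{\Sigma_1}(T)$, i.e. $1$-consistency of $T$, suffices to prove the consistency of $T$ with all of $\Rfn(T)$.
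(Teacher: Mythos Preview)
Your proposal has a genuine gap at precisely the step you flagged as the main obstacle. The claim that $U := T + \RFN_{\Sigma_1}(T)$ proves $\forall n\,\Con(T + \Rfn_{\Sigma_n}(T))$, equivalently $\Con(T + \Rfn(T))$, is false in general. Take any $\Pi_2$-axiomatized $T$, e.g.\ $T = \EA$: by the reduction property (Theorem~\ref{th:s2conserv} with $n=0$, or \cite[Proposition~4.6]{Bek03}), $T + \RFN_{\Sigma_1}(T)$ is $\Sigma_2$-conservative over $T + \Rfn_{\Sigma_1}(T)$. Since $\Con(T+\Rfn(T))$ is $\Pi_1$, if $U$ proved it then so would $T + \Rfn_{\Sigma_1}(T) \subseteq T + \Rfn(T)$, and hence $T + \Rfn(T)$ would prove its own consistency, contradicting G\"odel's second incompleteness theorem. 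The ``classical fact'' you appeal to is off by one level: Feferman's theorem gives $[1]_T$-provability of each $\Rfn(T)$-instance, and the consistency consequence one extracts is $\neg[1]_T\bot \imp \Con(T+\Rfn(T))$, where $\neg[1]_T\bot$ is $\RFN_{\Sigma_2}(T)$, not $\RFN_{\Sigma_1}(T)$. (Even passing from $1$-provability of each axiom to $1$-provability of each theorem of $T+\Rfn(T)$ is delicate over $\EA$; this is exactly the issue analyzed later in Section~4.)

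The paper's argument is much simpler and sidesteps the problem entirely: it never asks $U$ to prove the target consistency statement. It uses only the single instance $\Box_U\bot \imp \bot$ of $\Rfn_{\Sigma_2}(U)$, obtaining $\Con(U) = \Con(T + \RFN_{\Sigma_1}(T))$ outright in $\EA + \Rfn_{\Sigma_2}(U)$, and then invokes the $\EA$-provable $\Pi_1$-conservativity $T + \Rfn_{\Sigma_1}(T) \equiv_{\Pi_1} T + \Rfn(T)$ from \cite[Proposition~5.1(ii)]{Bek03}. This yields $\EA \vdash \Con(T+\RFN_{\Sigma_1}(T)) \imp \Con(T+\Rfn_{\Sigma_1}(T)) \eqv \Con(T+\Rfn(T))$, and the theorem follows. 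In short: reflect once to get $\Con(U)$ and reason about consistency externally in $\EA$, rather than trying to push the argument inside $U$.
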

\begin{proof}
Point (i) of Theorem \ref{th:ocon-s2rfn} implies that
$\EA + \omegacon_{\Sigma_1}(T) \vdash \Con(T + \RFN_{\Sigma_1}(T))$,
whence the result follows, once we show that
\begin{equation}\label{eq:aim}
\EA \vdash \Con(T + \RFN_{\Sigma_1}(T)) \imp  \Con(T + \Rfn(T)).
\end{equation}
We clearly have 
$\EA \vdash \Con(T + \RFN_{\Sigma_1}(T)) \imp \Con(T + \Rfn_{\Sigma_1}(T))$.
Also the following $\Pi_1$-conservativity result is provable in $\EA$ (see \cite[Proposition 5.1.(ii)]{Bek03}),
$$
T + \Rfn_{\Sigma_1}(T) \equiv_{\Pi_1} T + \Rfn(T).
$$
As a consequence, these two theories are provably equiconsistent
$$
\EA \vdash  \Con(T + \Rfn_{\Sigma_1}(T)) \eqv \Con(T + \Rfn(T)),
$$
whence \eqref{eq:aim} follows immediately.

\end{proof}

In particular, $\PA + \omegacon_{\Sigma_1}(\PA) \vdash \Con(\PA + \Rfn(\PA))$, which solves Problem 8.4 
and, consequently, Problems 8.1 and 8.3 as well, because then 
$$
\PA + \Rfn(\PA) \not \vdash \omegacon(\PA),
$$ 
by G{\"o}del's second incompleteness theorem, whence, certainly,
$$
\PA + \Rfn(\PA) \not \vdash  \omegacon^{\mathsf{Th}}(\PA).
$$

Recall from Section 2 that $\omegacon^{\mathsf{Th}}(\PA)$ is implied by $\RFN(\PA)$. We prove that these 
two schemata are equivalent for an arbitrary theory $T$, which provides an alternative solution to Problem 8.3.
\begin{theorem}\label{th:oconth}
Over $\EA$, $\omegacon^{\mathsf{Th}}(T) \equiv \RFN(T)$.
\end{theorem}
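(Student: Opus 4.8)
The plan is to establish the two inclusions separately. Throughout I would use the reduction noted in Section~2: over $\EA$ the schema $\RFN(T)$ is equivalent, via small reflection, to the schema $\forall x\,\Box_T\phi(\num{x})\imp\forall x\,\phi(x)$ ranging over all arithmetical $\phi(x)$, and sequence coding lets us restrict attention to formulas in a single free variable. The inclusion $\EA+\RFN(T)\vdash\omegacon^{\mathsf{Th}}(T)$ is the routine one (it is the half already indicated in \cite{KayKot} for $\PA$), while $\EA+\omegacon^{\mathsf{Th}}(T)\vdash\RFN(T)$ is where the content lies.

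For $\EA+\RFN(T)\vdash\omegacon^{\mathsf{Th}}(T)$, fix a formula $\phi(x)$ and a sentence $\sigma$ and reason inside $\EA+\RFN(T)$, by contraposition. Assume $\sigma$, $\forall x\,\Box_T(\sigma\imp\phi(\num{x}))$, and, for contradiction, $\Box_T\neg(\sigma\land\forall x\,\phi(x))$. Applying $\RFN(T)$ in its small-reflection form to the formula $\sigma\imp\phi(x)$ gives $\sigma\imp\forall x\,\phi(x)$, hence $\forall x\,\phi(x)$ using $\sigma$. On the other hand, $\neg(\sigma\land\forall x\,\phi(x))$ is $T$-provably equivalent to $\sigma\imp\exists x\,\neg\phi(x)$ (and $\EA$ verifies this), so we get $\Box_T(\sigma\imp\exists x\,\neg\phi(x))$; as this is a sentence, the corresponding local reflection instance of $\RFN(T)$ yields $\sigma\imp\exists x\,\neg\phi(x)$, hence $\exists x\,\neg\phi(x)$ using $\sigma$. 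This contradicts $\forall x\,\phi(x)$; therefore $\Diamond_T(\sigma\land\forall x\,\phi(x))$.

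For $\EA+\omegacon^{\mathsf{Th}}(T)\vdash\RFN(T)$, fix $\phi(x)$ in one free variable; it suffices to derive $\forall x\,\Box_T\phi(\num{x})\imp\forall x\,\phi(x)$. Reasoning in $\EA+\omegacon^{\mathsf{Th}}(T)$, assume $\forall x\,\Box_T\phi(\num{x})$ and, for contradiction, $\exists x\,\neg\phi(x)$. The decisive step is to instantiate $\omegacon^{\mathsf{Th}}(T)$ with this very $\phi(x)$ and with the sentence $\sigma:=\exists x\,\neg\phi(x)$ itself (keeping the bound variable of $\sigma$ disjoint from the universally quantified $x$ in the axiom). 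Then $\sigma$ holds by assumption, and $\forall x\,\Box_T(\sigma\imp\phi(\num{x}))$ follows from $\forall x\,\Box_T\phi(\num{x})$ by the formalized L\"ob conditions, since $\EA\vdash\forall x\,\Box_T(\phi(\num{x})\imp(\sigma\imp\phi(\num{x})))$ ($\phi(\num{x})\imp(\sigma\imp\phi(\num{x}))$ being a logical validity, uniformly in $x$) together with provable closure of $\Box_T$ under modus ponens. So the hypothesis of this instance of $\omegacon^{\mathsf{Th}}(T)$ is satisfied, and we obtain $\Diamond_T(\sigma\land\forall x\,\phi(x))$. But $\sigma\land\forall x\,\phi(x)$, i.e. $\exists x\,\neg\phi(x)\land\forall x\,\phi(x)$, is a pure first-order contradiction, so $\EA\vdash\Box_T\neg(\sigma\land\forall x\,\phi(x))$ by the first L\"ob condition — contradicting $\Diamond_T(\sigma\land\forall x\,\phi(x))$. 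Hence $\forall x\,\phi(x)$.

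I do not anticipate a real obstacle. The only step that is not pure bookkeeping is recognizing the self-referential choice $\sigma:=\exists x\,\neg\phi(x)$, which collapses the right-hand side of the relevant instance of $\omegacon^{\mathsf{Th}}(T)$ onto a $T$-refutable sentence and thereby extracts uniform reflection from what superficially looks like a mere $\omega$-consistency statement; in particular the argument does not route through $\Rfn(T)$. The remaining care is routine: handling the one-variable small-reflection reformulation of $\RFN(T)$, and verifying the uniform derivability facts $\EA\vdash\forall x\,\Box_T(\phi(\num{x})\imp(\sigma\imp\phi(\num{x})))$ and $\EA\vdash\Box_T\neg(\sigma\land\forall x\,\phi(x))$, both of which are standard consequences of provable $\Sigma_1$-completeness and the provable L\"ob conditions.
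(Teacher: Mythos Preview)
Your proposal is correct and follows essentially the same approach as the paper. In particular, the key step --- instantiating $\omegacon^{\mathsf{Th}}(T)$ with $\sigma := \exists x\,\neg\phi(x)$ (equivalently $\neg\forall x\,\phi(x)$) so that the conclusion $\Diamond_T(\sigma\land\forall x\,\phi(x))$ becomes $\Diamond_T\bot$ --- is exactly the paper's move; your derivation of the forward direction is a contrapositive reorganization of the paper's direct chain, but uses the same two applications of reflection.
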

\begin{proof}

To derive $\omegacon^{\mathsf{Th}}(T)$ from $\RFN(T)$ argue as follows
\begin{align*}
\EA + \RFN(T) \vdash \sigma \land  \forall x\, \Box_T( \sigma \imp  \phi(\num{x}))  
&\imp \sigma \land \forall x\, (\sigma \imp \phi(x))\\
&\imp \sigma \land \forall x\, \phi(x)\\
&\imp \Diamond_T (\sigma \land \forall x\, \phi(x)),
\end{align*}
where the first and the last implications use $\RFN(T)$.

To show the converse fix a formula $\phi(x)$ with a single free variable $x$. 
We show
\begin{equation}\label{eq:oconth1}
\EA + \omegacon^{\mathsf{Th}}(T) \vdash  \forall x\, \Box_T \phi(\num{x}) \imp \forall x\, \phi(x),
\end{equation}
whence $\EA + \omegacon^{\mathsf{Th}}(T) \vdash \RFN(T)$ follows. 
Denote the sentence $\neg \forall x\, \phi(x)$ by $\sigma$
and argue by contraposition
\begin{align*}
\EA + \omegacon^{\mathsf{Th}}(T) \vdash  \sigma \land \forall x\, \Box_T \phi(\num{x}) 
&\imp \sigma \land \forall x\, \Box_T (\sigma \imp \phi(\num{x}))\\
&\imp \Diamond_T(\sigma \land \forall x\, \phi(x))\\
&\imp \Diamond_T \bot\\
&\imp \bot,
\end{align*}
which yields \eqref{eq:oconth1}.

\end{proof}

In particular, $\PA + \Rfn(\PA) \not \vdash  \omegacon^{\mathsf{Th}}(\PA)$, since $\PA + \Rfn(\PA) \not \vdash \RFN(\PA)$,
which gives another solution to Problem 8.3.

In order to formulate these corollaries of Theorem \ref{th:ocon-s2rfn} for an arbitrary theory $T$ in place of $\PA$ we recall the following definition.
A theory $T$ is said to have \emph{infinite characteristic}, if the theory $T_\omega$ is consistent, where
$$
T_0 := T, \quad T_{n+1} := T_n + \Con(T_n), \quad T_\omega := \bigcup_{n < \omega} T_n.
$$

It is known that the theory $T + \Rfn(T)$ is equiconsistent with $T_\omega$ (see \cite[Corollary 2.35]{Bek05en}) .
This fact together with Theorem \ref{th:oconrcon} and G{\"o}del's second incompleteness theorem yields the following
\begin{corollary}
For each theory $T$ of infinite characteristic 
$$
T + \Rfn(T) \not \vdash \omegacon_{\Sigma_1}(T).
$$
\end{corollary}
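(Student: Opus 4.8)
The plan is to assemble this corollary from three ingredients that are all available by the point it appears: Theorem~\ref{th:oconrcon}, the equiconsistency of $T + \Rfn(T)$ with $T_\omega$, and G\"odel's second incompleteness theorem. The overall shape of the argument is a short chain of implications, so I expect no serious obstacle — the only real work is arranging the equiconsistency facts in the right direction and invoking them inside $\EA$ (or, more precisely, arguing informally in the metatheory, since these are all statements about consistency of arithmetically presented theories).

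First I would argue by contradiction: suppose $T + \Rfn(T) \vdash \omegacon_{\Sigma_1}(T)$ for some $T$ of infinite characteristic. Then, formalizing Theorem~\ref{th:oconrcon} (which states $\EA + \omegacon_{\Sigma_1}(T) \vdash \Con(T + \Rfn(T))$), we get $T + \Rfn(T) \vdash \Con(T + \Rfn(T))$. Here I should note that $T + \Rfn(T)$ extends $\EA$ and hence can carry out the deduction of Theorem~\ref{th:oconrcon} internally, so from $T+\Rfn(T) \vdash \omegacon_{\Sigma_1}(T)$ it indeed follows that $T + \Rfn(T) \vdash \Con(T + \Rfn(T))$. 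By G\"odel's second incompleteness theorem, this forces $T + \Rfn(T)$ to be inconsistent.

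Next I would derive a contradiction with the hypothesis that $T$ has infinite characteristic. By the cited fact (\cite[Corollary 2.35]{Bek05en}), $T + \Rfn(T)$ is equiconsistent with $T_\omega$; in particular, if $T + \Rfn(T)$ is inconsistent, then $T_\omega$ is inconsistent. But $T$ having infinite characteristic means precisely that $T_\omega$ is consistent. This contradiction completes the proof.

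The one point requiring a little care — though I would not call it a genuine obstacle — is making sure the formalization of Theorem~\ref{th:oconrcon} is legitimate: the theorem is stated as a provability claim over $\EA$, so since $T + \Rfn(T) \supseteq \EA$ we may substitute the hypothesis $\omegacon_{\Sigma_1}(T)$, derivable in $T + \Rfn(T)$ by assumption, into that deduction to conclude $T + \Rfn(T) \vdash \Con(T + \Rfn(T))$. Everything else is a direct appeal to G\"odel~II and to the equiconsistency of $T + \Rfn(T)$ with $T_\omega$, exactly as indicated in the sentence preceding the corollary statement.
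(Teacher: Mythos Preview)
Your argument is correct and follows exactly the approach indicated in the paper: combine Theorem~\ref{th:oconrcon} with G{\"o}del's second incompleteness theorem and the equiconsistency of $T + \Rfn(T)$ with $T_\omega$. The paper leaves this as a one-line remark, and you have simply unpacked that line in full detail.
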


\noindent {\bf Remark.}
The author learned (personal communication) that the mentioned problems were already solved by V.~Shavrukov back in 2010 (unpublished). 
His proof, which he kindly provided to us, also uses Smory{\'n}ski's characterization of $\omegacon(T)$ and is almost identical to ours except one minor point. 
Let us present his proof and make some comments concerning its relationship to our proof. It goes as follows
\begin{align*}
\PA \vdash \omegacon_{\Sigma_1}(\PA) &\imp \Rfn_{\Sigma_2}(\PA + \RFN_{\Pi_2}(\PA))\\
&\imp  \Rfn_{\Sigma_2}(\PA + \RFN_{\Pi_2}(\PA + \Rfn(\PA)))\\
&\imp \Con(\PA + \Con(\PA + \Rfn(\PA)))\\
&\imp \Con(\PA + \Rfn(\PA)),
\end{align*} 
which again solves Problem 8.4 and the other two problems as well. 
The second implication in the above proof uses the following fact \cite[Theorem 5.1.i]{Smo77a},
$$
\PA \vdash  \RFN_{\Pi_2}(\PA) \eqv  \RFN_{\Pi_2}(\PA + \Rfn(\PA)),
$$
While this is true for the theories containing $\ISi_1$, it does not hold for weaker theories, e.g., $\EA$ or $\EA^+$.
\bigskip

Let us comment on this issue a little bit more.
It is well-known (the so-called Feferman theorem  \cite[Theorem 4.5]{Fef62}) that the local reflection schema $\Rfn(S)$ 
for a consitent r.e.~theory $S$ is contained
in $S$ together with the set $\mathsf{Th}_{\Pi_1}(\nat)$ of all true $\Pi_1$-sentences.

The usual informal proof goes as follows. For a given instance $\Box_S\psi \imp \psi$ of $\Rfn(S)$ consider two possibilities:
either $S \vdash \psi$ or $S \not \vdash \psi$. In the first case we have $S \vdash \Box_S\psi \imp \psi$,
and in the second case, since $\neg \Box_S \psi$ is a true $\Pi_1$-sentence, 
we obtain $S + \mathsf{Th}_{\Pi_1}(\nat) \vdash \neg \Box_S \psi$, implying 
$S + \mathsf{Th}_{\Pi_1}(\nat) \vdash \Box_S\psi \imp \psi$.

It follows that each theorem of the theory $S + \Rfn(S)$ is provable in $S + \mathsf{Th}_{\Pi_1}(\nat)$.
Natural formalization of the above proof gives
\begin{equation}\label{eq:fefax}
\EA \vdash \forall \psi\, [1]_S (\Box_S\psi \imp \psi).
\end{equation}
However, the formalization of 1-provability in $S$ of each theorem of $S + \Rfn(S)$, i.e., the following $\Pi_3$-sentence
\begin{equation}\label{eq:fefthm}
\forall \phi \left(\Box_{S + \Rfn(S)}\phi \imp [1]_S\phi\right),
\end{equation}
cannot be proven in $\EA$ (and even in $\EA^+$). 

To derive \eqref{eq:fefthm} in $\ISi_1$ from 1-provability of each axiom \eqref{eq:fefax},
one can use $\Sigma_2$-collection schema $\BSi_2$ (since the provability predicate $[1]_S$ is $\Sigma_2$) 
in the same way as $\BSi_1$ is usually applied to conclude the provability of each theorem given 
that of each axiom for $\Sigma_1$-provability predicates (see, e.g., \cite[Proposition 5.1]{Bek03} for this type of argument). 
After that $\Pi_3$-conservativity of $\BSi_2$ over $\ISi_1$ (see \cite[Theorem 4.1]{Bek98}) 
yields the provability of \eqref{eq:fefthm} in $\ISi_1$ (cf. Theorem \ref{th:rfnsi2} below for $T = \EA$).
As for the non-provability of \eqref{eq:fefthm} in $\EA^+$, we have the following

\begin{proposition}
$\EA^+ \not \vdash \forall \phi \left(\Box_{S + \Rfn(S)}\phi \imp [1]_S\phi\right)$.
\end{proposition}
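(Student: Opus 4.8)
The plan is to prove non-provability by exhibiting a model of $\EA^+$ in which the $\Pi_3$-sentence \eqref{eq:fefthm} fails. The key observation is that \eqref{eq:fefthm} is essentially a uniform-reflection-type statement: combined with \eqref{eq:fefax}, which holds provably in $\EA$, it says that $S + \Rfn(S)$ is contained in the $1$-provable consequences of $S$, hence (taking $\phi = \bot$) it yields $\Con(S + \Rfn(S)) \imp \RFN_{\Sigma_1}(S)$-type strength. More precisely, I would first extract from \eqref{eq:fefthm} and \eqref{eq:fefax} the consequence $\EA + \eqref{eq:fefthm} \vdash \Con(S+\Rfn(S)) \imp \Con(S + \RFN_{\Sigma_1}(S))$, or more directly that \eqref{eq:fefthm} entails $\Box_{S+\Rfn(S)}\bot \imp [1]_S \bot$; since $\EA^+$ proves neither the consistency statements at issue nor (being $\Pi_1$-axiomatized-ish and very weak) anything of this reflective strength, a counterexample model exists.

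Concretely, I would take $S$ to be, say, $\EA$ itself (or note that the argument is uniform in $S$), and argue as follows. By \eqref{eq:fefax} we have $\EA \vdash \forall\psi\,[1]_S(\Box_S\psi \imp \psi)$, so in any model $M \models \EA^+ + \eqref{eq:fefthm}$ we would get $M \models \forall\phi(\Box_{S+\Rfn(S)}\phi \imp [1]_S\phi)$, and in particular, substituting $\phi := \bot$, $M \models \Box_{S+\Rfn(S)}\bot \imp [1]_S\bot$, i.e. $M \models \Con(S+\Rfn(S)) \to \neg[1]_S\bot$. Now $\neg [1]_S\bot$ is $\RFN_{\Sigma_1}(S)$ over $\EA$, which is a genuine $\Pi_1$-strengthening of $\Con(S)$; and $\Con(S+\Rfn(S))$ is itself $\Pi_1$ and strictly stronger than $\Con(S)$. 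So the implication is a non-trivial $\Pi_2$-consequence. To see $\EA^+$ cannot prove it, I would use the standard fact that $\EA^+$ is $\Pi_1$-conservative — indeed finitely axiomatizable and of very limited reflective power — over $\EA$, together with a model obtained by a cut or by the arithmetized completeness theorem inside a model of $\EA + \Con(S+\Rfn(S)) + \neg\RFN_{\Sigma_1}(S)$; such a model exists because $\EA + \Con(S+\Rfn(S))$ does not prove $\RFN_{\Sigma_1}(S)$ (the latter being strictly stronger, by Gödel's theorem applied in the appropriate form, since $\RFN_{\Sigma_1}(S) \vdash \Con(S+\Con(S)) \vdash \ldots$). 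One then checks this model, suitably arranged, satisfies $\EA^+$: this is where one uses that superexponentiation is total in the ambient model and survives the construction.

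The main obstacle I anticipate is the last step: producing a model of $\EA^+$ (not merely $\EA$) that witnesses the failure. Conservativity of $\EA^+$ over $\EA$ is only $\Pi_1$ (or at best $\Pi_2$ with care), whereas the sentence \eqref{eq:fefthm} we are refuting is $\Pi_3$; so a bare conservativity argument does not immediately apply. The cleanest route is probably to work inside a nonstandard model $N \models \EA^+$ that is already $\Sigma_1$-ill-behaved in the right way — e.g. obtained from a model of $\EA^+ + \neg\RFN_{\Sigma_1}(S)$ together with $\Con(S+\Rfn(S))$ (which are jointly consistent because $\EA^+ + \Con(S+\Rfn(S))$, being a consistent r.e. theory, does not prove the stronger $\RFN_{\Sigma_1}(S)$) — and verify directly that in $N$ the implication $\Box_{S+\Rfn(S)}\bot \imp [1]_S\bot$ fails. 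I would expect the bookkeeping to reduce to: (i) $\EA^+$ is too weak to run the $\BSi_2$-collection argument that bridges \eqref{eq:fefax} to \eqref{eq:fefthm}, and (ii) exhibiting, via a Henkin/ACT construction over such an $N$, the required separation; the cited $\Pi_3$-conservativity of $\BSi_2$ over $\ISi_1$ and the sharp failure of these principles below $\ISi_1$ should provide exactly the needed leverage.
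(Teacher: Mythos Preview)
There is a genuine gap, and it stems from a reversed contraposition at the very first step. From the instance $\phi := \bot$ of \eqref{eq:fefthm} you get $\Box_{S+\Rfn(S)}\bot \imp [1]_S\bot$, whose contrapositive is
\[
\neg[1]_S\bot \ \imp\ \Con(S+\Rfn(S)),
\qquad\text{i.e.}\qquad
\RFN_{\Sigma_1}(S)\ \imp\ \Con(S+\Rfn(S)),
\]
\emph{not} $\Con(S+\Rfn(S)) \imp \neg[1]_S\bot$ as you write. (Note also that $\neg[1]_S\bot$ is $\Pi_2$, not $\Pi_1$.) This reversal drives the rest of your sketch in the wrong direction: you try to build a model of $\EA^+ + \Con(S+\Rfn(S)) + \neg\RFN_{\Sigma_1}(S)$, whereas what would be needed is a model of $\EA^+ + \RFN_{\Sigma_1}(S) + \neg\Con(S+\Rfn(S))$. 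Your justification that ``$\RFN_{\Sigma_1}(S)$ is stronger, since $\RFN_{\Sigma_1}(S) \vdash \Con(S+\Con(S)) \vdash \ldots$'' is in fact assuming the very implication that, over $\EA^+$, is at stake.

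The paper's proof avoids all model-building. With the correct direction, if $\EA^+$ proved \eqref{eq:fefthm} then $\EA^+ + \neg[1]_S\bot \vdash \Con(S+\Rfn(S))$; since $\EA^+ \subseteq \EA + \RFN_{\Sigma_1}(S)$, this gives $\EA + \RFN_{\Sigma_1}(S) \vdash \Con(S+\Rfn(S))$. Now one invokes the $\Sigma_2$-conservativity of $\EA + \RFN_{\Sigma_1}(S)$ over $\EA + \Rfn_{\Sigma_1}(S)$ (Beklemishev's reduction property; the $n=0$ case of Theorem~\ref{th:s2conserv}) to conclude $\EA + \Rfn_{\Sigma_1}(S) \vdash \Con(S+\Rfn(S))$, which contradicts G\"odel's second incompleteness theorem since $\EA + \Rfn_{\Sigma_1}(S) \subseteq S + \Rfn(S)$. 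No ACT construction, no cut, no worry about lifting from $\EA$ to $\EA^+$ --- the conservativity result does all the work that your anticipated ``main obstacle'' was trying to handle.
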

\begin{proof}
Aiming at a contradiction assume $\EA^+$ proves \eqref{eq:fefthm}. Instantiating $\phi$ with $\bot$, 
we get that $\Con(S + \Rfn(S))$ is provable in $\EA^+ + \neg [1]_S \bot$, i.e., 
in $\EA + \RFN_{\Sigma_1}(S)$, since $\EA^+$ is contained in the latter theory. 
But $\EA + \RFN_{\Sigma_1}(S)$ is $\Sigma_2$-conservative over $\EA + \Rfn_{\Sigma_1}(S)$ (see \cite[Proposition 4.6]{Bek03}),
whence 
$$
\EA + \Rfn_{\Sigma_1}(S) \vdash \Con(S + \Rfn(S)),
$$ 
contradicting G{\"o}del's second incompleteness theorem.
\end{proof}

A natural question to ask then is whether the formalized version of Feferman's result \eqref{eq:fefthm} is actually equivalent to $\ISi_1$
over a weaker theory? We establish this equivalence and, in fact, we obtain a more general result using 
$\EA^+$-provable characterization of the set $C_S(T) = \{\phi \mid T \vdash [1]_S\phi\}$
in terms of iterated local reflection principles for $S$.

See \cite{KolmBek} for the precise definitions of iterated local reflection principles $\Rfn(S)_\alpha$ 
and the corresponding notion of $\Sigma^0_2$-ordinal $|T|_{\Sigma^0_2}$ of a theory.
The following theorem, which is a formalization of the equivalence $C_S(T) \equiv \Rfn(S)_{1 + \alpha}$ 
for ``nice'' ($\EA^+$-provably $\Sigma^0_2$-regular) theories $T$, was obtained in \cite[Theorem 5.3]{KolmBek}. 
\begin{theorem}\label{th:cstrfn}
For each $\EA^+$-provably $\Sigma^0_2$-regular theory $T$ with $\Sigma^0_2$-ordinal $\alpha$
$$
\EA^+ \vdash \forall \phi \left(\Box_T[1]_S\phi \eqv \Box_{\Rfn(S)_{1+\alpha}}\phi\right).
$$
\end{theorem}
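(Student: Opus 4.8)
The plan is to read the biconditional as the $\EA^+$-provable form of the set equality $C_S(T) = \Rfn(S)_{1+\alpha}$ and to verify its two inclusions separately. First I would note that $\Box_T[1]_S\phi$ and $\Box_{\Rfn(S)_{1+\alpha}}\phi$ are each $\Sigma_1$-formulas in the free variable $\phi$ (a provability predicate applied to an elementary term in $\phi$), so the whole assertion is a genuine $\Pi_3$-sentence, and the point of the theorem is that it holds already in $\EA^+$ rather than only in $\ISi_1$. The tools I would lean on are: the Feferman-style fact \eqref{eq:fefax}, i.e.\ $\EA \vdash \forall\psi\,[1]_S(\Box_S\psi \imp \psi)$; the L\"ob conditions and provable $\Sigma_1$-completeness of the $\Box_U$ together with provable $\Sigma_2$-completeness of $[1]_S$; the equivalence $\neg[1]_U\bot \equiv \RFN_{\Sigma_1}(U)$ over $\EA$; the $\Sigma_{n+2}$-conservativity results of reduction-property type (cf.\ Theorem \ref{th:s2conserv}); and the basic structural facts about the elementary well-ordering along which $\Rfn(S)_\bullet$ is iterated.

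For the inclusion $\Box_{\Rfn(S)_{1+\alpha}}\phi \imp \Box_T[1]_S\phi$, I would prove the key lemma that every axiom of $\Rfn(S)_{1+\alpha}$ is, provably in $T$ and uniformly ($\EA^+$-verifiably) so, $1$-provable in $S$, that is, $\EA^+ \vdash \forall\psi\,(\mathsf{Ax}_{\Rfn(S)_{1+\alpha}}(\psi) \imp \Box_T[1]_S\psi)$. The $S$-axioms are handled as in the commented argument for Theorem \ref{th:oconrcon} (they are $\EA$-provably $1$-provable in $S$ already), while a local reflection axiom $\Box_{\Rfn(S)_\beta}\chi \imp \chi$ with $\beta < 1+\alpha$ is treated by transfinite induction on $\beta$ along the fixed ordering: the base case $\beta = 0$ is exactly \eqref{eq:fefax}, and the successor and limit steps combine the Feferman case-split (on $\Box_{\Rfn(S)_\beta}\chi$ versus its $\Pi_1$-negation, the latter being absorbed by $[1]_S$) with the induction hypothesis packaged as the $T$-provable $1$-provability closure of the earlier stages. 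To pass from axioms to theorems of $\Rfn(S)_{1+\alpha}$ — the step that normally demands $\BSi_2$-collection — I would exploit that any single $\Rfn(S)_{1+\alpha}$-proof invokes the hierarchy only below a code-bounded ordinal, so that the uniform closure data supplied by the $\EA^+$-provable $\Sigma^0_2$-regularity of $T$ suffice to reassemble the corresponding $T$-derivation of $[1]_S\phi$ inside $\EA^+$.

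For the converse $\Box_T[1]_S\phi \imp \Box_{\Rfn(S)_{1+\alpha}}\phi$, I would use that $[1]_S\phi$ is $\Sigma_2$, and that the defining property of $\alpha = |T|_{\Sigma^0_2}$ — in the $\EA^+$-provable form guaranteed by $\Sigma^0_2$-regularity — yields $\EA^+ \vdash \Box_T[1]_S\phi \imp \Box_{\Rfn(\EA)_{1+\alpha}}[1]_S\phi$, since the two provability predicates agree on $\Sigma_2$-sentences. It then remains to unwind one layer of $[1]_S$ by local reflection over $S$ and to re-anchor the hierarchy from $\EA$ to $S$, obtaining $\EA^+ \vdash \Box_{\Rfn(\EA)_{1+\alpha}}[1]_S\phi \imp \Box_{\Rfn(S)_{1+\alpha}}\phi$; chaining the two implications closes this direction.

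I expect the re-anchoring step to be the main obstacle: one must show that the single ordinal $1+\alpha = 1+|T|_{\Sigma^0_2}$, a quantity attached to $T$ by reference to the $\EA$-anchored canonical hierarchy, simultaneously governs the $S$-anchored hierarchy $\Rfn(S)_\bullet$ for every admissible base $S$, and this has to be carried out in $\EA^+$ rather than in $\ISi_1$. This is exactly where the hypothesis that $T$ be $\EA^+$-provably $\Sigma^0_2$-regular pays off: it internalizes in $\EA^+$ the characterization of $C_\EA(T)$ as $\Rfn(\EA)_{1+\alpha}$ together with the accompanying proof transformations, which is what replaces the collection arguments that are not available below $\ISi_1$. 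The remaining pieces — the transfinite induction along the ordering, the Feferman case-split, and the bookkeeping with $\True_{\Pi_1}$ — should be routine once superexponentiation is at hand.
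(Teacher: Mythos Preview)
The paper does not contain a proof of this theorem. It is quoted verbatim as \cite[Theorem 5.3]{KolmBek} and used as a black box; the precise definitions of $\Rfn(S)_\alpha$, of the $\Sigma^0_2$-ordinal $|T|_{\Sigma^0_2}$, and of ``$\EA^+$-provably $\Sigma^0_2$-regular'' are all deferred to \cite{KolmBek}. Consequently there is no ``paper's own proof'' against which to compare your attempt.

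That said, a few remarks on your sketch. Your overall decomposition into the two inclusions and your identification of the Feferman case-split \eqref{eq:fefax} and the collection-avoidance issue as the central obstacles are exactly the right instincts, and they do match the shape of the argument in \cite{KolmBek}. One small slip: the displayed sentence is $\Pi_2$, not $\Pi_3$, since both $\Box_T[1]_S\phi$ and $\Box_{\Rfn(S)_{1+\alpha}}\phi$ are $\Sigma_1$ in the free variable $\phi$, so their biconditional is (provably) $\Pi_2$ and the outer universal quantifier keeps it $\Pi_2$. More substantively, your plan for the direction $\Box_{\Rfn(S)_{1+\alpha}}\phi \imp \Box_T[1]_S\phi$ relies on a transfinite induction on $\beta < 1+\alpha$ carried out \emph{inside} $\EA^+$; for arbitrary elementary well-orderings this is not available, and it is precisely the hypothesis ``$\EA^+$-provably $\Sigma^0_2$-regular'' that packages whatever is needed here --- but since you do not unpack that definition, the induction step as you describe it is not yet a proof. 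Likewise, the ``re-anchoring'' from $\Rfn(\EA)_{1+\alpha}$ to $\Rfn(S)_{1+\alpha}$ is not a triviality and again hinges on how the iterated hierarchies are set up in \cite{KolmBek}. In short: your outline is plausible and morally aligned with the cited source, but the load-bearing steps all rest on definitions that neither you nor the present paper spells out, so the sketch cannot be assessed as a self-contained argument.
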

Natural examples of such theories $T$ and corresponding $\Sigma^0_2$-ordinals $|T|_{\Sigma^0_2}$ are
listed in the table below. 
Here $\omega_0 := 1$, $\omega_{n+1} := \omega^{\omega_n}$ and $\varepsilon_0 = \sup\{\omega_n \mid n < \omega\}$. See \cite{KolmBek} for more examples and details.

\begin{center}
  \begin{tabular}{ | c | c | c | c | c | c | }
    \hline
    $T$ & $\EA$ & $\ISi_1$ & $\ISi_n$ & $\PA$ & $\Rfn(\EA)_\beta$ \\ \hline
    $|T|_{\Sigma^0_2}$ & $0$ & $\omega$ & $\omega_n$ & $\varepsilon_0$ &  $\beta$ \\ \hline
  \end{tabular}
\end{center}

Let us denote by $\RFN_{[1]_S}(T)$ the uniform $T$-reflection schema for $[1]_S$-formulas,
i.e., the following $\Pi_3$-sentence
$$
\forall \phi \left(\Box_T[1]_S\phi \imp [1]_S\phi\right).
$$
Theorem \ref{th:cstrfn} immediately yields the following
\begin{corollary}\label{col:cstrfn}
For each $\EA^+$-provably $\Sigma^0_2$-regular theory $T$ with $\Sigma^0_2$-ordinal $\alpha$,
the following are equivalent over $\EA^+$
\begin{itemize}
\item[(i)] $\RFN_{[1]_S}(T)$,
\item[(ii)] $\forall \phi \left(\Box_{\Rfn(S)_{1+\alpha}}\phi \imp [1]_S\phi\right)$.
\end{itemize}
\end{corollary}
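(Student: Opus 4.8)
The plan is to deduce Corollary \ref{col:cstrfn} directly from Theorem \ref{th:cstrfn} by substituting the provable equivalence $\Box_T[1]_S\phi \eqv \Box_{\Rfn(S)_{1+\alpha}}\phi$ into the statements of (i) and (ii). First I would fix an $\EA^+$-provably $\Sigma^0_2$-regular theory $T$ with $\Sigma^0_2$-ordinal $\alpha$ and work inside $\EA^+$. By Theorem \ref{th:cstrfn}, $\EA^+$ proves $\forall \phi\,(\Box_T[1]_S\phi \eqv \Box_{\Rfn(S)_{1+\alpha}}\phi)$. Hence, reasoning in $\EA^+$ and quantifying over all codes $\phi$, the antecedent $\Box_T[1]_S\phi$ in (i) can be replaced by $\Box_{\Rfn(S)_{1+\alpha}}\phi$, which turns $\forall\phi\,(\Box_T[1]_S\phi \imp [1]_S\phi)$ into $\forall\phi\,(\Box_{\Rfn(S)_{1+\alpha}}\phi \imp [1]_S\phi)$, i.e., (ii). The converse substitution is identical, giving (i) from (ii). Both directions are carried out uniformly in $\phi$, so no collection or conservativity argument is needed here — the work was already done in establishing Theorem \ref{th:cstrfn}.

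Concretely, I would present it as: assume (i) and argue
\begin{align*}
\EA^+ + \RFN_{[1]_S}(T) \vdash \Box_{\Rfn(S)_{1+\alpha}}\phi
&\imp \Box_T[1]_S\phi\\
&\imp [1]_S\phi,
\end{align*}
where the first implication is Theorem \ref{th:cstrfn} and the second is the assumed schema (i); since $\phi$ is arbitrary this yields (ii). For the other direction, assume (ii) and argue
\begin{align*}
\EA^+ + (ii) \vdash \Box_T[1]_S\phi
&\imp \Box_{\Rfn(S)_{1+\alpha}}\phi\\
&\imp [1]_S\phi,
\end{align*}
again using Theorem \ref{th:cstrfn} for the first step, which is exactly $\RFN_{[1]_S}(T)$.

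There is no real obstacle: the only thing to be careful about is that Theorem \ref{th:cstrfn} is stated as an equivalence provable in $\EA^+$ with $\phi$ universally quantified inside the box-free context, so the substitution into the $\Pi_3$-sentences (i) and (ii) is legitimate and stays within $\EA^+$. The mild subtlety worth a remark is that this is genuinely an equivalence of schemata over $\EA^+$ — both (i) and (ii) are single $\Pi_3$-sentences, and the passage between them does not leave the base theory — so the corollary is immediate once Theorem \ref{th:cstrfn} is in hand.
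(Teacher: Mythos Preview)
Your proposal is correct and matches the paper's own treatment: the corollary is stated there as an immediate consequence of Theorem~\ref{th:cstrfn}, and your substitution argument is exactly the intended reasoning. No additional ingredients are needed.
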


Now, we need to relate $\RFN_{[1]_S}(T)$ with the schema $\RFN_{\Sigma_2}(T)$. 
This is done by applying the generalized version of the Friedman-Goldfarb-Harrington principle (see \cite{Joo} for various generalizations of this principle).
Note the presence of $\BSi_1$ in our formulation, since our formalization of 1-provability predicate $[1]_S$ is slightly different from that of \cite{Joo}.

\begin{lemma}\label{lm:fgh}
For each $\Sigma_2$-formula $\sigma(x)$ there is a formula $\phi(x)$ such that
\begin{itemize}
\item[(i)] $\EA \vdash \sigma(x) \lor [1]_S \bot \imp [1]_S \phi(\num{x})$,
\item[(ii)] $\EA + \BSi_1 \vdash [1]_S \phi(\num{x}) \imp \sigma(x) \lor [1]_S\bot$. 
\end{itemize}
\end{lemma}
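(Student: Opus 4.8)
\textbf{Proof proposal for Lemma \ref{lm:fgh} (generalized Friedman--Goldfarb--Harrington principle).}

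The plan is to follow the classical FGH strategy, working with the $1$-provability predicate $[1]_S$ in place of ordinary $\Box_S$, and inserting $\BSi_1$ exactly where the current formalization of $[1]_S$ forces us to. First I would recall that $[1]_S$ satisfies L\"ob's derivability conditions and is provably $\Sigma_2$-complete over $\EA$; in particular $[1]_S$ enjoys provable $\Sigma_2$-completeness, so for a $\Sigma_2$-formula $\sigma(x)$ we have $\EA \vdash \sigma(x) \imp [1]_S\sigma(\num{x})$ (more precisely, $[1]_S$ of the formalized $\sigma(\num{x})$). The idea is to define $\phi(x)$ by a fixed-point so that, provably, $\phi(\num{x})$ ``says'' that some $[1]_S$-proof witnessing $\phi(\num{x})$ itself appears no later than a $[1]_S$-proof of $\neg\sigma(\num{x})$. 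Concretely, using the arithmetized proof predicate $\Prf^{[1]}_S(p,y)$ underlying $[1]_S$, apply the diagonal lemma to obtain $\phi(x)$ with
$$
\EA \vdash \phi(x) \eqv \exists p\,\bigl(\Prf^{[1]}_S(p,\gn{\phi(\num{x})}) \land \forall q \le p\, \neg\Prf^{[1]}_S(q,\gn{\neg\sigma(\num{x})})\bigr).
$$

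For part (i): reason in $\EA$ and fix $x$. If $[1]_S\bot$ holds then $[1]_S\phi(\num{x})$ holds trivially by $[1]_S$-completeness for $\bot$ (everything is $1$-provable). So assume $\sigma(x)$. By provable $\Sigma_2$-completeness of $[1]_S$ we get $[1]_S\sigma(\num{x})$, hence $[1]_S\neg\neg\sigma(\num{x})$, so inside $S$-plus-true-$\Pi_1$ there is no proof of $\neg\sigma(\num{x})$ unless the theory is $1$-inconsistent --- but we must formalize this more carefully, as in the standard argument. The cleanest route is the usual one: show $\EA \vdash [1]_S\phi(\num{x}) \lor [1]_S\neg\sigma(\num{x})$ is \emph{not} what we want; rather, argue that $\EA \vdash \neg\phi(\num{x}) \imp \bigl([1]_S\neg\sigma(\num{x}) \to \text{something}\bigr)$. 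I would instead mimic Solovay's proof of FGH: one shows $\EA$ proves that if $\sigma(x)$ then a $[1]_S$-proof of $\phi(\num{x})$ exists by taking the least counterexample and using provable completeness, so $[1]_S\phi(\num{x})$; this is where a small amount of reflection-style reasoning about least witnesses enters, and the $\Sigma_2$-ness of $[1]_S$ is exactly what provable $\Sigma_2$-completeness handles.

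For part (ii), the direction where $\BSi_1$ is needed: reason in $\EA + \BSi_1$ and assume $[1]_S\phi(\num{x})$. Unwinding the fixed point, a $[1]_S$-proof $p$ of $\phi(\num{x})$ exists with no earlier $[1]_S$-proof of $\neg\sigma(\num{x})$. If also $\neg\sigma(x)$ held, then --- and here is the crux --- we would want to conclude $[1]_S\neg\sigma(\num{x})$; but $\neg\sigma(x)$ is $\Pi_2$, not $\Sigma_2$, so we cannot directly invoke provable completeness. The standard trick is that from $[1]_S\phi(\num{x})$ we can, inside $[1]_S$, ``run'' the proof and derive $[1]_S$-provably that $\phi(\num{x})$ holds, hence $[1]_S[1]_S\phi(\num{x})$, and then inside the box the disjunct $\forall q\le p\,\neg\Prf^{[1]}_S(q,\gn{\neg\sigma(\num{x})})$ lets us bound the search --- and this bounded search over a $\Sigma_1$-predicate $\Prf^{[1]}_S$ (the proof predicate for $[1]_S$ is itself $\Sigma_1$ because truth of the $\Pi_n$-oracle sentence used is $\Pi_n$, but with $n=1$ the witness is $\Pi_1$, and membership ``$q$ is a proof'' with a fixed displayed oracle-sentence is genuinely $\Sigma_1$ once we existentially quantify the true $\Pi_1$-sentence certificate) is precisely what $\BSi_1$ turns into a bounded formula. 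So I expect the argument to produce, in $\EA + \BSi_1$, the implication $[1]_S\phi(\num{x}) \imp \sigma(x) \lor [1]_S\bot$ by: either the $[1]_S$-proof of $\phi(\num{x})$ is ``honest'', in which case $\sigma(x)$ (else the least $[1]_S$-proof of $\neg\sigma(\num{x})$ gives a $[1]_S$-proof of $\neg\sigma(\num{x})$ which, combined with $[1]_S$-provability of $\sigma(\num{x})$-or-its-negation-decidability, collapses to $[1]_S\bot$), or the theory is $1$-inconsistent. The collection principle $\BSi_1$ is used to pass from ``for each bound, no proof below it'' statements to a uniform $\Sigma_2$-statement and back, exactly as $\BSi_1$ is used in the classical FGH proof to handle the proof-predicate's complexity.

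\textbf{Main obstacle.} The delicate point is part (ii): correctly identifying why $\BSi_1$ (and not more) suffices, given that the displayed $1$-provability predicate carries a hidden $\Pi_1$-truth oracle. The reason is that $\Prf^{[1]}_S(p,y)$ as formalized here is $\Sigma_1$ (one existentially quantifies the true $\Pi_1$-certificate), so reasoning about the least such $p$ and bounded searches below it is governed by $\Sigma_1$-collection; getting this bookkeeping exactly right, and confirming that no appeal to $\ISi_1$ or $\BSi_2$ sneaks in, is the part I would write out most carefully. Everything else is the standard FGH fixed-point manipulation with $[1]_S$ substituted for $\Box_S$, using that $[1]_S$ satisfies L\"ob's conditions and provable $\Sigma_2$-completeness over $\EA$.
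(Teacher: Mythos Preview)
Your overall strategy---an FGH-style fixed point adapted to $[1]_S$---is the right one, but the specific fixed point you wrote down is not the one that works, and your complexity bookkeeping is off in a way that misidentifies where $\BSi_1$ enters.

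The fixed point you propose, $\phi(x)\eqv\exists p\bigl(\Prf^{[1]}_S(p,\gn{\phi(\num x)})\land\forall q\le p\,\neg\Prf^{[1]}_S(q,\gn{\neg\sigma(\num x)})\bigr)$, compares a $[1]_S$-proof of $\phi$ itself against a $[1]_S$-proof of $\neg\sigma$. This is a Henkin--Rosser hybrid, and your argument for (i) never gets off the ground with it: assuming $\sigma(x)$ gives $[1]_S\sigma(\num x)$ by $\Sigma_2$-completeness, but that says nothing about whether any $[1]_S$-proof of $\phi(\num x)$ exists, so the case split you gesture at (``least counterexample \ldots'') has no foothold. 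The standard FGH construction---and the one the paper uses---compares a \emph{witness of $\sigma$} to a $[1]_S$-proof of $\phi$: writing $\sigma(x)$ as $\exists y\,\psi(y,x)$ with $\psi\in\Pi_1$ and letting $\chi(z,\theta)$ be the natural ``$z$ is a $[1]_S$-proof of $\theta$'' predicate, one takes
\[
\phi(x)\ \eqv\ \exists y\,\bigl(\psi(y,x)\ \land\ \forall z<y\,\neg\chi(z,\gn{\phi(\num x)})\bigr).
\]
With this $\phi$, part (i) is the usual dichotomy on a fixed witness $y_0$: either no $[1]_S$-proof of $\phi$ occurs below $y_0$ (then $\phi$ holds, and under $[1]_S$ one has the true $\Pi_1$ sentence $\psi(\num{y_0},\num x)$), or some $z<y_0$ is such a proof, and one invokes the \emph{small reflection principle} inside $\Box_S$ for that specific $z$. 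No least-element or L\"ob-style reasoning is needed.

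Your complexity claim is also wrong: the natural proof predicate $\chi(z,\theta)$ for $[1]_S$ is $\Pi_1$, not $\Sigma_1$---the code $z$ displays the oracle sentence $\pi$, and one must assert $\True_{\Pi_1}(\pi)$. This matters for locating $\BSi_1$. In part (ii) one assumes $\neg\sigma(x)$ together with some $u$ witnessing $[1]_S\phi(\num x)$, and derives $[1]_S\neg\phi(\num x)$. Under $[1]_S$, the case $y\ge\num u$ is handled because $\chi(\num u,\gn{\phi(\num x)})$ is true $\Pi_1$ and hence available; for $y<\num u$ one needs $\forall y<\num u\,\neg\psi(y,\num x)$ under $[1]_S$. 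Externally each $\neg\psi(\num y,\num x)$ is a true $\Sigma_1$-sentence, so $\Box_S\neg\psi(\num y,\num x)$ for every $y<u$; the role of $\BSi_1$ is \emph{precisely} to collect these $u$-many $S$-proofs into a single proof of $\bigwedge_{y<u}\neg\psi(\num y,\num x)$. It is not used for any ``bounded search over a $\Sigma_1$ proof predicate''.
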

\begin{proof}
Let $\sigma(x)$ be $\exists y\, \psi(y, x)$ for $\Pi_1$-formula $\psi(y, x)$, and 
let $\chi(z, \theta)$ be the formula $\exists \pi < z \left(\True_{\Pi_1}(\pi) \land \Prf_S(z, \gn{\pi \imp \theta})\right)$.
We clearly have $\EA \vdash [1]_S\theta \eqv \exists z\, \chi(z, \theta).$
Consider the following fixed-point
$$
\EA \vdash \phi(x) \eqv \exists y \left(\psi(y, x) \land \forall z < y\, \neg \chi(z, \gn{\phi(\num{x})})\right).
$$
To prove $(i)$ note that, clearly, $[1]_S\bot$ implies $[1]_S \phi(\num{x})$, so we only show
$$
\EA \vdash \sigma(x) \imp [1]_S \phi(\num{x}).
$$
Fix $y$ such that $\psi(y, x)$ and argue under $[1]_S$. The proof of $\phi(\num{x})$ goes by considering two cases. 
If the second member of the conjunction in the definition of $\phi(x)$
is true, i.e., $\forall z < \num{y}\, \neg \chi(z, \gn{\phi(\num{x})})$, then, since we also have $\psi(\num{y}, \num{x})$, being 
a true $\Pi_1$-sentence, we obtain $\phi(\num{x})$. Otherwise, $\exists z \leqslant \num{y}\, \chi(z, \gn{\phi(\num{x})})$. 
This formula is equivalent to the disjunction, since $\num{y}$ is a numeral, so it is sufficient to prove the following
$$
\EA \vdash \forall y\, \Box_S \left(\bigvee_{z \leqslant y} \chi(\num{z}, \gn{\phi(\num{x})}) \imp \phi(\num{x}) \right). 
$$
The proof of the above goes by induction on $y$, which is formalizable in $\EA$, since we can bound the code
of the corresponding $S$-proof by some elementary term $p(y)$. 
The induction step goes through, once the following fact is established
$$
\EA \vdash \forall z\, \Box_S \left(\chi(\num{z}, \gn{\phi(\num{x})}) \imp \phi(\num{x})\right).
$$
This follows from the small reflection principle (see Proposition \ref{prop:smrfn}).
Indeed, argue under $\Box_S$ and assume $\chi(\num{z}, \gn{\phi(\num{x})})$, that is,
$\exists \pi < \num{z} \left(\True_{\Pi_1}(\pi) \land \Prf_S(\num{z}, \gn{\pi \imp \phi(\num{x})})\right)$.
Again, this $\Pi_1$-sentence $\pi$ is standard, so $\True_{\Pi_1}(\pi)$ is equivalent to $\pi$ 
(and the proof of this fact is bounded by the fixed elementary term of $z$ and, consequently, of $y$), since we have
$$
\EA \vdash \forall \pi \in \Pi_1\, \Box_S\left(\True_{\Pi_1}(\pi) \eqv \pi\right). 
$$
The small reflection principle yields $\pi \imp \phi(\num{x})$ from  $\Prf_S(\num{z}, \gn{\pi \imp \phi(\num{x})})$, whence $\phi(\num{x})$ follows.

To prove $(ii)$ assume $[1]_S\phi(\num{x})$ and $\neg \sigma(x)$. We prove $[1]_S\neg\phi(\num{x})$, whence $[1]_S\bot$ follows. 
Recall that 
$$
\EA \vdash \neg \phi(x) \eqv \forall y \left(\psi(y, x) \imp \exists z \leqslant y\, \chi(z, \gn{\phi(\num{x})})\right).
$$
Denote the formula in the brackets by $\theta(y, x)$. 
Fix $u$ such that $\chi(u, \gn{\phi(\num{x})})$ (follows from $[1]_S\phi(\num{x})$) and argue under $[1]_S$. 
To prove $\neg\phi(\num{x})$, i.e., $\forall y\, \theta(y, \num{x})$, we fix an arbitrary $y$ and consider two cases. 
If $\num{u} \leqslant y$, then since we have $\chi(\num{u}, \gn{\phi(\num{x})})$ under $[1]_S$, being a true $\Sigma_2$-sentence, 
we obtain $\exists z \leqslant y\, \chi(z, \gn{\phi(\num{x})})$, where we can take $z$ to be $\num{u}$, which itself implies $\theta(y, \num{x})$.
If we have $y < \num{u}$, then it is left to prove the following ($u$ can be treated as a free variable)
$$
\EA + \BSi_1 \vdash \neg \sigma(x) \imp [1]_S \forall y < \num{u}\, \neg \psi(y, \num{x}),
$$
since the formula under $[1]_S$ implies $\theta(y, \num{x})$. Again, since $\num{u}$ is standard, it is sufficient to prove
$$
\EA + \BSi_1 \vdash \neg \sigma(x) \imp \Box_S \bigwedge_{y < u} \neg \psi(\num{y}, \num{x}).
$$  
By provable $\Sigma_1$-completeness we have
$$
\EA \vdash \neg \sigma(x) \imp \forall y < u\, \Box_S \neg \psi(\num{y}, \num{x}),
$$
so we need to prove 
$$
\EA + \BSi_1 \vdash \forall y < u\, \Box_S \neg \psi(\num{y}, \num{x}) \imp \Box_S \bigwedge_{y < u} \neg \psi(\num{y}, \num{x}).
$$ 
This is where $\Sigma_1$-collection schema $\BSi_1$ is used. Applying $\Sigma_1$-collection to the formula
$\forall y < u\, \Box_S \neg \psi(\num{y}, \num{x})$, we get a sequence of $S$-proofs $\psi(\num{y}, \num{x})$ for $y < u$,
and then we concatenate them (and make some other trivial transformations) to obtain the proof of $\bigwedge_{y < u} \neg \psi(\num{y}, \num{x})$.

Combining the above proofs of $\theta(y, \num{x})$ under $[1]_S$ for the two cases considered above, we obtain
$$
\EA + \BSi_1 \vdash \neg \sigma(x) \land [1]_S\phi(\num{x}) \imp [1]_S\neg \phi(\num{x}),
$$
which proves $(ii)$.
\end{proof}

The next lemma shows that $\RFN_{[1]_S}(T)$ is equivalent to the full $\Sigma_2$-reflection schema $\RFN_{\Sigma_2}(T)$
over  $\EA + \BSi_1 + \neg [1]_S\bot$.

\begin{lemma}\label{lm:rfnsig2}
\begin{itemize}
\item[(i)] $\EA \vdash \RFN_{\Sigma_2}(T) \imp \RFN_{[1]_S}(T)$,
\item[(ii)] $\EA + \BSi_1 + \neg [1]_S\bot \vdash \RFN_{[1]_S}(T) \imp \RFN_{\Sigma_2}(T)$.
\end{itemize}

\end{lemma}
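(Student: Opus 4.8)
The plan is to handle the two items separately; item (i) is essentially a matter of unwinding definitions, while item (ii) will be extracted from the generalized Friedman-Goldfarb-Harrington equivalence just established in Lemma~\ref{lm:fgh}. For (i), I would simply note that $[1]_S(x)$ is, by its definition, a $\Sigma_2$-formula in the single free variable $x$; hence among the instances of the schema $\RFN_{\Sigma_2}(T)$ is the sentence $\forall x\,(\Box_T[1]_S(\num{x})\imp[1]_S(x))$, and after renaming the number variable $x$ as the code variable $\phi$ this is literally $\RFN_{[1]_S}(T)$. So (i) holds already over $\EA$, with nothing to prove beyond this observation (one only has to check that the numeral/code conventions make $\Box_T[1]_S(\num{x})$ coincide with the $\Box_T[1]_S\phi$ occurring in the definition of $\RFN_{[1]_S}(T)$).

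For (ii), fix a $\Sigma_2$-formula $\sigma(x)$ (one free variable suffices, by sequence coding) and let $\phi(x)$ be the formula furnished by Lemma~\ref{lm:fgh} for this $\sigma$. I would argue in three steps. First, clause~(i) of Lemma~\ref{lm:fgh} gives in particular $\EA\vdash\sigma(x)\imp[1]_S\phi(\num{x})$ (just drop the disjunct $[1]_S\bot$); since $\EA\subseteq T$ this is a theorem of $T$, so by L\"ob's first condition and formalized instantiation $\EA\vdash\forall x\,\Box_T(\sigma(\num{x})\imp[1]_S\phi(\num{x}))$, and hence $\EA\vdash\forall x\,(\Box_T\sigma(\num{x})\imp\Box_T[1]_S\phi(\num{x}))$ by the second derivability condition. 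Second, apply $\RFN_{[1]_S}(T)$, instantiated at (the code of) $\phi(\num{x})$, to replace $\Box_T[1]_S\phi(\num{x})$ by $[1]_S\phi(\num{x})$; composing with the first step yields $\EA+\RFN_{[1]_S}(T)\vdash\forall x\,(\Box_T\sigma(\num{x})\imp[1]_S\phi(\num{x}))$. Third, invoke clause~(ii) of Lemma~\ref{lm:fgh}, which over $\EA+\BSi_1$ gives $[1]_S\phi(\num{x})\imp\sigma(x)\lor[1]_S\bot$, and then use the standing hypothesis $\neg[1]_S\bot$ to delete the second disjunct. Chaining the three steps produces $\forall x\,(\Box_T\sigma(\num{x})\imp\sigma(x))$, i.e.\ the $\sigma$-instance of $\RFN_{\Sigma_2}(T)$; since $\sigma$ was arbitrary, $\RFN_{\Sigma_2}(T)$ follows.

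I do not expect a serious obstacle here, since Lemma~\ref{lm:fgh} already carries all of the content. The only points requiring care are bookkeeping ones: getting the numeral/code conventions to line up (so that the $[1]_S(x)$-instance of $\RFN_{\Sigma_2}(T)$ really is $\RFN_{[1]_S}(T)$, and so that the instantiation of $\RFN_{[1]_S}(T)$ at $\phi(\num{x})$ matches the $\Box_T[1]_S\phi(\num{x})$ produced in the first step), and keeping track of where the two extra axioms are used --- both $\BSi_1$ and $\neg[1]_S\bot$ enter only in the last step, through clause~(ii) of the FGH lemma and the elimination of the $[1]_S\bot$ disjunct, respectively, so that direction (i) and the "upgrading" step need only $\EA$.
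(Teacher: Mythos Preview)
Your proposal is correct and follows essentially the same route as the paper: part~(i) is disposed of by noting that $[1]_S\phi$ is $\Sigma_2$, and part~(ii) is obtained by chaining $\Box_T\sigma(\num{x})\imp\Box_T[1]_S\phi(\num{x})\imp[1]_S\phi(\num{x})\imp\sigma(x)$ via Lemma~\ref{lm:fgh}(i), $\RFN_{[1]_S}(T)$, Lemma~\ref{lm:fgh}(ii), and $\neg[1]_S\bot$. The only cosmetic difference is that the paper keeps the disjunct $[1]_S\bot$ through the first step (writing $\Box_T\sigma(\num{x})\imp\Box_T(\sigma(\num{x})\lor[1]_S\bot)\imp\Box_T[1]_S\phi(\num{x})$) rather than dropping it beforehand, but this changes nothing.
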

\begin{proof}
Point $(i)$ is clear, since $[1]_S\phi$ is a $\Sigma_2$-formula.

The reverse implication $(ii)$ follows from Lemma \ref{lm:fgh}. 
Fix $\phi(x)$ obtained by applying this lemma to $\Sigma_2$-formula $\sigma(x)$.
We then have
\begin{align*}
\EA + \BSi_1 + \neg [1]_S\bot + \RFN_{[1]_S}(T) \vdash \Box_T \sigma(\num{x}) &\imp \Box_T (\sigma(\num{x}) \lor [1]_S \bot)\\
&\imp \Box_T [1]_S \phi(\num{x})\\
&\imp [1]_S \phi(\num{x})\\
&\imp \sigma(x) \lor [1]_S \bot\\
&\imp \sigma(x),
\end{align*}
i.e., $\EA + \BSi_1 + \neg [1]_S\bot \vdash \RFN_{[1]_S}(T) \imp \RFN_{\Sigma_2}(T)$, as required.
\end{proof}

The following theorem, which is a direct corollary of Lemma \ref{lm:rfnsig2} and Corollary \ref{col:cstrfn}, 
demonstrates that over a weaker theory, $\RFN_{\Sigma_2}(T)$ is equivalent
to the generalized Feferman theorem. 
\begin{theorem}\label{th:rfnsi2}
For each $\EA^+$-provably $\Sigma^0_2$-regular theory $T$ with $\Sigma^0_2$-ordinal $\alpha$
\begin{itemize}
\item[(i)] $\EA + \RFN_{\Sigma_2}(T) \vdash \forall \phi \left(\Box_{\Rfn(S)_{1 + \alpha}}\phi \imp [1]_S\phi\right)$,
\item[(ii)] $\EA +  \BSi_1 + \neg [1]_S\bot + \forall \phi \left(\Box_{\Rfn(S)_{1 + \alpha}}\phi \imp [1]_S\phi\right) \vdash \RFN_{\Sigma_2}(T)$.
\end{itemize}
\end{theorem}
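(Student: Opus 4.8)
The plan is to read both parts off the composition of Lemma~\ref{lm:rfnsig2} with Corollary~\ref{col:cstrfn}. Since Corollary~\ref{col:cstrfn} is formulated over $\EA^+$, the only genuinely non-formal step is to check that the base theory appearing in each part proves $\EA^+$; once that is in place, each direction is a two-link chain of implications, the substantive content having already been spent on Lemma~\ref{lm:fgh} (the generalized Friedman-Goldfarb-Harrington argument) and on Theorem~\ref{th:cstrfn}.

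For (i), I would first note that $\EA + \RFN_{\Sigma_2}(T)$ extends $\EA^+$: indeed $\RFN_{\Sigma_2}(T)$ implies $\RFN_{\Sigma_1}(T)$, hence $\RFN_{\Sigma_1}(\EA)$ since $T$ extends $\EA$, and $\EA + \RFN_{\Sigma_1}(\EA) \vdash \EA^+$ (as already used above). Lemma~\ref{lm:rfnsig2}(i) then gives $\EA + \RFN_{\Sigma_2}(T) \vdash \RFN_{[1]_S}(T)$, which is exactly clause (i) of Corollary~\ref{col:cstrfn}; working over $\EA^+$, that corollary converts it into clause (ii), namely $\forall \phi\,(\Box_{\Rfn(S)_{1+\alpha}}\phi \imp [1]_S\phi)$, which is the desired conclusion.

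For (ii), the base theory is $\EA + \BSi_1 + \neg [1]_S\bot$, to which the hypothesis $\forall \phi\,(\Box_{\Rfn(S)_{1+\alpha}}\phi \imp [1]_S\phi)$ is adjoined. Here $\neg [1]_S\bot$ is equivalent over $\EA$ to $\RFN_{\Sigma_2}(S)$, which again yields $\RFN_{\Sigma_1}(\EA)$ and hence $\EA^+$, so this base theory too extends $\EA^+$; consequently Corollary~\ref{col:cstrfn} applies and turns the hypothesis — its clause (ii) — into $\RFN_{[1]_S}(T)$, its clause (i). Finally Lemma~\ref{lm:rfnsig2}(ii), whose base theory is precisely $\EA + \BSi_1 + \neg [1]_S\bot$, produces $\RFN_{\Sigma_2}(T)$.

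I do not anticipate a real obstacle: once Lemma~\ref{lm:rfnsig2} and Corollary~\ref{col:cstrfn} are available the theorem is a purely formal composition. The one point warranting care is the bookkeeping of base theories — verifying that both $\EA + \RFN_{\Sigma_2}(T)$ and $\EA + \BSi_1 + \neg [1]_S\bot$ prove $\EA^+$ — so that Corollary~\ref{col:cstrfn}, whose statement presupposes $\EA^+$, may legitimately be invoked in each direction.
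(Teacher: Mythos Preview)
Your proposal is correct and follows exactly the approach of the paper, which simply declares the theorem a ``direct corollary of Lemma~\ref{lm:rfnsig2} and Corollary~\ref{col:cstrfn}'' without spelling out the details. Your extra bookkeeping---verifying that both base theories prove $\EA^+$ so that Corollary~\ref{col:cstrfn} is available---is a legitimate point the paper leaves implicit, and your justifications (via $\RFN_{\Sigma_1}(\EA)$ in part~(i) and via $\neg[1]_S\bot \equiv \RFN_{\Sigma_2}(S)$ in part~(ii)) are correct.
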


In particular, taking $T$ and $S$ to be $\EA$ we obtain the following alternative axiomatization of the theory $\ISi_1$.
\begin{corollary}\label{cor:isi1rfn}
The following theories are deductively equivalent
\begin{itemize}
\item[(i)] $\ISi_1$,
\item[(ii)] $\EA + \RFN_{\Sigma_2}(\EA)$,
\item[(iii)] $\EA^+ + \BSi_1 + \forall \phi \left(\Box_{\EA + \Rfn(\EA)}\phi \imp [1]_\EA\phi\right)$.
\end{itemize}
\end{corollary}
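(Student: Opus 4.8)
The plan is to close a cycle of deductive equivalences $(i) \Rightarrow (ii) \Rightarrow (iii) \Rightarrow (i)$, using the results already established in the excerpt together with the Leivant--Ono theorem $\EA + \RFN_{\Sigma_2}(\EA) \equiv \ISi_1$ (quoted right before Corollary \ref{col:cordfr}). For $(i) \equiv (ii)$ there is nothing new to do: this is precisely the Leivant--Ono equivalence instantiated at $n = 0$, so I would simply cite it. The remaining work is to tie in theory $(iii)$, the $\EA^+$-based axiomatization via the generalized Feferman theorem for $S = T = \EA$, and here I would specialize Theorem \ref{th:rfnsi2}. Note that $\EA$ is $\EA^+$-provably $\Sigma^0_2$-regular with $\Sigma^0_2$-ordinal $\alpha = 0$ (the first entry of the table), so $\Rfn(S)_{1+\alpha} = \Rfn(\EA)_1 = \EA + \Rfn(\EA)$, and the schema appearing in Theorem \ref{th:rfnsi2} becomes exactly $\forall \phi\,(\Box_{\EA + \Rfn(\EA)}\phi \imp [1]_\EA \phi)$, matching clause $(iii)$.

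For $(ii) \Rightarrow (iii)$: working in $\EA + \RFN_{\Sigma_2}(\EA)$, part (i) of Theorem \ref{th:rfnsi2} (with $T = S = \EA$) immediately gives the Feferman schema of clause $(iii)$. I also need the side conditions $\EA^+$ and $\BSi_1$: both are standard consequences of $\ISi_1 \equiv \EA + \RFN_{\Sigma_2}(\EA)$ — indeed $\ISi_1 \vdash \EA^+$ and $\ISi_1 \vdash \BSi_1$ — so $(ii)$ proves all three components of $(iii)$.

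For $(iii) \Rightarrow (i)$: the theory $(iii)$ contains $\EA^+$, $\BSi_1$, and the Feferman schema, so to apply part (ii) of Theorem \ref{th:rfnsi2} I only need the extra hypothesis $\neg [1]_\EA \bot$, i.e.\ $\RFN_{\Sigma_1}(\EA)$. This follows because the Feferman schema itself, instantiated at $\phi := \bot$, yields $\Box_{\EA + \Rfn(\EA)}\bot \imp [1]_\EA \bot$, and since $\EA + \Rfn(\EA)$ is consistent (it is a true theory), $\EA^+$ proves $\neg\Box_{\EA+\Rfn(\EA)}\bot$ by provable $\Sigma_1$-completeness applied to the outer $\Box_\EA$ layer — more carefully, $\neg[1]_\EA\bot$ is equivalent to $\RFN_{\Sigma_1}(\EA)$, which is a true $\Pi_2$-sentence, and one checks it is derivable here from the Feferman schema at $\bot$. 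Granting $\neg[1]_\EA\bot$, Theorem \ref{th:rfnsi2}(ii) gives $\EA + \BSi_1 + \neg[1]_\EA\bot + (\text{Feferman schema}) \vdash \RFN_{\Sigma_2}(\EA)$, hence $(iii) \vdash \EA + \RFN_{\Sigma_2}(\EA) \equiv \ISi_1$, closing the cycle.

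The main obstacle is the inference of $\neg[1]_\EA\bot$ inside theory $(iii)$: one must be sure that the consistency of $\EA + \Rfn(\EA)$ is available in $\EA^+$ in the right syntactic form, rather than merely being a true statement. Everything else is bookkeeping: checking that the table entry $|\EA|_{\Sigma^0_2} = 0$ makes $\Rfn(\EA)_{1+\alpha}$ collapse to $\EA + \Rfn(\EA)$, and recalling the standard containments $\ISi_1 \vdash \EA^+ + \BSi_1$.
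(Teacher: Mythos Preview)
Your overall plan matches the paper's: use Leivant--Ono for $(i)\Leftrightarrow(ii)$ and specialize Theorem~\ref{th:rfnsi2} with $T=S=\EA$, $\alpha=0$ for $(ii)\Leftrightarrow(iii)$. The $(ii)\Rightarrow(iii)$ direction is fine.

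The gap is exactly the step you yourself flag as ``the main obstacle'': your derivation of $\neg[1]_\EA\bot$ inside theory~$(iii)$ does not work. Instantiating the Feferman schema at $\phi=\bot$ gives the implication $\Box_{\EA+\Rfn(\EA)}\bot\imp[1]_\EA\bot$, whose contrapositive is $\neg[1]_\EA\bot\imp\Con(\EA+\Rfn(\EA))$; this goes in the wrong direction and cannot yield $\neg[1]_\EA\bot$. Your suggested alternative, that $\EA^+\vdash\Con(\EA+\Rfn(\EA))$, is simply false: $\EA+\Rfn(\EA)$ is equiconsistent with the $\omega$-fold iteration of consistency over $\EA$, far beyond the reach of $\EA^+$ (and provable $\Sigma_1$-completeness has nothing to do with proving consistency statements).

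The fix is much simpler than the detour through the Feferman schema: $\neg[1]_\EA\bot$ is exactly $\RFN_{\Sigma_1}(\EA)$, and it is a standard fact that $\EA+\RFN_{\Sigma_1}(\EA)\equiv\EA^+$. Hence $\EA^+$, which is already part of theory~$(iii)$, proves $\neg[1]_\EA\bot$ outright, and Theorem~\ref{th:rfnsi2}(ii) then applies. This is precisely how the paper closes the equivalence: it records that $\EA+\neg[1]_\EA\bot\equiv\EA^+$ and that $\ISi_1$ contains $\EA^++\BSi_1$.
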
 
\begin{proof}
The equivalence $(i) \Leftrightarrow (ii)$ is well-known (cf. \cite[Theorem 7]{Bek05en}).
The equivalence $(ii) \Leftrightarrow (iii)$ follows from the previous corollary, since
$\ISi_1$ contains $\EA^+ + \BSi_1$ and $\EA + \neg [1]_\EA\bot \equiv \EA^+$.
\end{proof}

\subsection*{Acknowledgements}
The author would like to thank Lev D. Beklemishev for useful discussions and his comments on the draft of the paper.
The author would also like to thank Vladimir Yu. Shavrukov for informing us about his solution of the mentioned problems
and presenting his proof to us, which lead to the investigation of the formalizability of Feferman's theorem.

The reported study was funded by RFBR, project number 19-31-90050.

\end{document}